\newtheorem{theorem}{Theorem}[section]
\newtheorem{proposition}[theorem]{Proposition}
\newtheorem{lemma}[theorem]{Lemma}
\newtheorem{corollary}[theorem]{Corollary}
\theoremstyle{definition}
\newtheorem{definition}[theorem]{Definition}
\newtheorem{example}[theorem]{Example}
\theoremstyle{remark}
\newtheorem*{remark}{Remark}
\numberwithin{equation}{section}
\newcommand{\RE}{\mbox{$\mathbb{R}$}}
\newcommand{\C}{\mbox{$\mathbb{C}$}}
\begin{document}

\title{The Dirichlet problem for $m$-subharmonic functions on compact sets}

\author{Per \AA hag}\address{Department of Mathematics and Mathematical Statistics\\ Ume\aa \ University\\SE-901~87 Ume\aa \\ Sweden}\email{Per.Ahag@umu.se}

\author{Rafa\l\ Czy{\.z}}\address{Faculty of Mathematics and Computer Science, Jagiellonian University, \L ojasiewicza~6, 30-348 Krak\'ow, Poland}
\thanks{The second-named author was partially supported by NCN grant DEC-2013/08/A/ST1/00312.}
\email{Rafal.Czyz@im.uj.edu.pl}

\author{Lisa Hed}\address{Department of Mathematics and Mathematical Statistics\\ Ume\aa \ University\\SE-901~87 Ume\aa \\ Sweden}
\email{Lisa.Hed@umu.se}

\keywords{Choquet boundaries, Dirichlet problems, $m$-harmonic measures, Jensen measures,  $m$-subharmonic functions, peak points, \v{S}ilov boundaries}
\subjclass[2010]{Primary 31C45, 32U05; Secondary  32T40, 46J10, 46A20.}

\begin{abstract}
 We characterize those compact sets for which the Dirichlet problem has a solution within the class of continuous $m$-subharmonic functions defined on a compact set, and then within the class of $m$-harmonic functions.
\end{abstract}

\maketitle

\begin{center}\bf
\today
\end{center}

\section{Introduction}

 A fundamental tool in the study of uniform algebras is the class of subharmonic functions defined on compact sets, and its dual, the
 Jensen measures. In~\cite{Gamelin}, Gamelin presented a model that can be used both for subharmonic as well as plurisubharmonic functions defined on compact sets. In this note we shall use this model to investigate the Dirichlet problem for $m$-(sub)harmonic functions. Our inspiration is the work of Poletsky~\cite{Poletsky1} and especially  Poletsky-Sigurdsson~\cite{PoletskySigurdsson1,PoletskySigurdsson2}.

  Two natural types of boundaries in potential theory are the Choquet boundary (Definition~\ref{def_Jzm}) with respect to a given class of Jensen measures, and the \v{S}ilov boundary (Definition~\ref{def_BXm}). In our study of the Dirichlet problem these boundaries have a prominent role, and therefore we shall in Section~\ref{sec_boundary} put extra attention on them in terms of for example peak points and harmonic $m$-measures. We shall then, in Section~\ref{sec_DPcont}, characterize those compact sets for which the Dirichlet problem has a
 solution within the class of continuous $m$-subharmonic functions (Theorem~\ref{thm_oregular} and Theorem~\ref{thm_whole}). We end this note in Section~\ref{sec_DPharmonic} with a Dirichlet problem for $m$-harmonic functions (Theorem~\ref{thm_poisson}). In the $1$-subharmonic case, these results were obtained by Hansen among others (see e.g.~\cite{BlietnerHansen,Hansen,perkins} and the references therein), and in the $n$-subharmonic case these results were proved by Poletsky-Sigurdsson~\cite{PoletskySigurdsson1,PoletskySigurdsson2}. In this note we prove the $1<m<n$ cases. We start in Section~\ref{sec_jensen} by stating some basic definitions and facts.

\section{Jensen measures and envelopes}\label{sec_jensen}

Let $\mathcal{SH}_m^o(X)$ denote the set of functions that are the restriction to $X$ of functions that are $m$-subharmonic and continuous on some neighborhood of $X\subseteq\C^n$. Furthermore, let $\mathcal{USC}(X)$ be the set of upper semicontinuous functions defined on $X$. For a background on $m$-subharmonic functions defined on an open set see e.g.~\cite{AS,L}. Recall that
\[
\mathcal{PSH}(\Omega)=\mathcal{SH}_n (\Omega)\subset\cdots\subset \mathcal{SH}_1(\Omega)=\mathcal{SH}(\Omega)\, ,
\]
where $\Omega$ is an open domain in $\C^n$, $\mathcal{PSH}$ denotes the plurisubharmonic functions, $\mathcal{SH}$ denotes the subharmonic functions and $\mathcal{SH}_m$ denotes the $m$-subharmonic functions defined on $\Omega$.

Next, we define a class of Jensen measures.

\begin{definition}\label{def_JzmK} Let $X$ be a compact set in $\C^n$, $1\leq m\leq n$, and let $\mu$ be a non-negative regular Borel measure defined on $X$ with $\mu(X)=1$. We say that $\mu$ is a \emph{Jensen measure with barycenter} $z\in X$ \emph{w.r.t.} $\mathcal{SH}_m^o(X)$ if
  \[
  u(z)\leq \int_{X} u \, d\mu \qquad\qquad \text{for all } u  \in \mathcal{SH}_m^o(X)\, .
  \]
The set of such measures will be denoted by $\mathcal{J}_z^m(X)$.
\end{definition}

 With the help of the Jensen measures defined in Definition~\ref{def_JzmK} we can
 now define $m$-subharmonic functions defined on compact sets. For more results about these functions, see~\cite{ACH}.

\begin{definition} \label{def_msubkomp}
Let $X$ be a compact set in $\C^n$. An upper semicontinuous function $u$ defined on $X$ is said to be \emph{$m$-subharmonic on $X$}, $1\leq m\leq n$,  if
\[
u(z) \leq \int_X u \, d\mu\, , \  \text { for all } \ z \in X \  \text { and all }\  \mu \in \mathcal{J}_z^m(X)\, .
\]
The set of  $m$-subharmonic functions defined on $X$ will be denoted by $\mathcal{SH}_m(X)$. A function $h:X\to\RE$ is called \emph{$m$-harmonic} if $h$, and $-h$, are $m$-subharmonic. The set of all $m$-harmonic functions defined on $X$ will be denoted by $\mathcal{H}_m(X)$. We shall call $n$-harmonic functions \emph{pluriharmonic}, and denote it by $\mathcal {PH}(X)=\mathcal{H}_n(X)$.
\end{definition}
\begin{remark}
By definition, we see that $\mathcal{SH}_m^o(X) \subseteq \mathcal{SH}_m(X)$.
\end{remark}
\begin{remark}
It follows from Definition~\ref{def_JzmK} that for any $z\in X$ we have
\[
\mathcal J_z^m(X)=\bigcap _{X\subset U}\mathcal J_z^m(U)=\bigcap _{X\subset U}\mathcal J_z^{m,c}(U)\, ,
\]
where $U$ denotes an \emph{open} set in $\mathbb C^n$. Recall that $\mu\in \mathcal J_z^m(U)$ $(\mathcal J_z^{m,c}(U))$ if $\mu$ is a probability measure with compact support in $U$, and for all $u\in \mathcal {SH}_m(U)$  $(u\in \mathcal {SH}_m(U)\cap \mathcal C(U))$ it holds that
\[
u(z)\leq \int u\,d\mu\, .
\]
Thanks to Theorem~2.2 in~\cite{ACH2} we have that $\mathcal J_z^m(U)=\mathcal J_z^{m,c}(U)$.
\end{remark}
\begin{remark}
Sometimes we will leave out the parenthesis and only write $\mathcal{J}_{z}^{m}$ instead of $\mathcal{J}_{z}^{m}(X)$, where $X$ is a compact set.
\end{remark}
\begin{remark}
From Definition~\ref{def_msubkomp} it follows that a continuous function $h:X\to\RE$ is $m$-harmonic if, and only if, for every $z_0\in\mathcal{J}_{z_0}^{m}$ we have that
\[
h(z_0)=\int h\, d\mu\, .
\]
\end{remark}
\begin{remark}
For Borel probability measures let us define the following two classes
\begin{align*}
\mathcal{M}_1 &=\left\{\mu : u(z)\leq \int_X u \, d\mu \text{ for all } u\in \mathcal {SH}_m(X)\right\}\\[2mm]
\mathcal{M}_2 &=\left\{\mu : u(z)\leq \int_X u \, d\mu \text{ for all } u\in \mathcal {SH}_m(X)\cap\mathcal C(X)\right\}\, .
\end{align*}
It follows from the proof of Theorem 2.8 in~\cite{ACH} that
\[
\mathcal{M}_1=\mathcal{M}_2=\mathcal{J}_z^m(X)\, .
\]
This means that the class of Jensen measures can be generated by the class of $m$-subharmonic functions on $X$ or by the class of \emph{continuous} $m$-subharmonic functions on $X$.
\end{remark}

In Definition~\ref{def} we introduce two useful envelope constructions.

\begin{definition}\label{def}
Assume that $X \subseteq \C^n$ is a compact set, and $1\leq m\leq n$.
For $f\in \mathcal C(X)$ we define
\[
\textbf {S}_f(z)=\sup\left\{v(z): v\in \mathcal {SH}_m(X), v\leq f\right\}\, ,
\]
and similarly
\[
\textbf {S}^c_f(z)=\sup\left\{v(z): v\in \mathcal {SH}_m(X)\cap\mathcal C(X), v\leq f\right\}\, .
\]
\end{definition}

We shall need the following version of Edwards' celebrated duality theorem (see Theorem 2.8 in~\cite{ACH}).

\begin{theorem}\label{thm_edwards}
Let $X$ be a compact subset in $\mathbb C^n$, $1\leq m\leq n$, and let $f$ be a real-valued lower semicontinuous function defined
on $X$. Then we have that
\begin{enumerate}\itemsep2mm
 \item[$(a)$]
\[
\sup\left \{\psi(z):\psi \in \mathcal{SH}_m^o(X) , \psi \leq f \right\}=\inf\left \{\int f \, d\mu : \mu \in \mathcal{J}_z^m(X)\right\}\, ,
\ \text {and}
\]
\item[$(b)$]
\[
\textbf {S}_{f}(z)=\textbf {S}^c_{f}(z)=\inf\left\{\int f \, d\mu: \mu \in \mathcal{J}_z^m(X)\right\}\, .
\]
\end{enumerate}
\end{theorem}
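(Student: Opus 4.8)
The plan is to prove Edwards' duality theorem in the form stated, which is the key technical tool for the Dirichlet-problem results that follow. The statement has two parts, and I would organize the proof around the classical Edwards duality, adapting it to the present setting of $m$-subharmonic functions on a compact set $X$. The core of Edwards' theorem is a minimax/Hahn-Banach argument on the cone of test functions, so I would first recall the abstract framework: given a convex cone $\mathcal{F}$ of upper semicontinuous functions on a compact space and the associated class of representing (Jensen) measures $\mathcal{J}_z$ defined by $u(z)\le\int u\,d\mu$ for all $u\in\mathcal{F}$, Edwards' theorem asserts that the upper envelope $\sup\{\psi:\psi\in\mathcal{F},\ \psi\le f\}$ equals the lower envelope $\inf\{\int f\,d\mu:\mu\in\mathcal{J}_z\}$ for lower semicontinuous $f$.

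For part $(a)$, I would apply the abstract Edwards theorem directly with $\mathcal{F}=\mathcal{SH}_m^o(X)$, which is a convex cone of continuous (hence upper semicontinuous) functions, and with $\mathcal{J}_z^m(X)$ being precisely the class of Jensen measures attached to this cone by Definition~\ref{def_JzmK}. The inequality $\le$ is the easy direction: any competitor $\psi\in\mathcal{SH}_m^o(X)$ with $\psi\le f$ satisfies $\psi(z)\le\int\psi\,d\mu\le\int f\,d\mu$ for every $\mu\in\mathcal{J}_z^m(X)$, whence the sup is bounded by the inf. The reverse inequality $\ge$ is where the Hahn-Banach separation enters: assuming strict inequality at some point would allow one to separate, producing a Jensen measure $\mu$ realizing a value below the left-hand side, a contradiction. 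I would cite the standard statement of Edwards' theorem for this direction, checking only that the hypotheses (lower semicontinuity of $f$, compactness of $X$, the cone containing the constants or at least being stable under the relevant operations) are met.

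For part $(b)$, the task is to show that enlarging the cone from $\mathcal{SH}_m^o(X)$ to the full classes $\mathcal{SH}_m(X)$ or $\mathcal{SH}_m(X)\cap\mathcal{C}(X)$ in the envelope does not change the value, and that both envelopes coincide with the same infimum over $\mathcal{J}_z^m(X)$. The point is that, by the remark following Definition~\ref{def_msubkomp}, the class of Jensen measures is unchanged whether generated by $\mathcal{SH}_m(X)$, by $\mathcal{SH}_m(X)\cap\mathcal{C}(X)$, or by $\mathcal{SH}_m^o(X)$; that is, $\mathcal{M}_1=\mathcal{M}_2=\mathcal{J}_z^m(X)$. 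Hence the infimum on the right of $(b)$ is identical to the infimum on the right of $(a)$. Since $\mathcal{SH}_m^o(X)\subseteq\mathcal{SH}_m(X)\cap\mathcal{C}(X)\subseteq\mathcal{SH}_m(X)$, the three envelopes satisfy the obvious monotone chain of inequalities, and they are all squeezed between the common lower envelope from $(a)$ and the trivial upper bounds; this forces equality throughout and yields $\textbf{S}_f(z)=\textbf{S}^c_f(z)=\inf\{\int f\,d\mu:\mu\in\mathcal{J}_z^m(X)\}$.

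I expect the main obstacle to be verifying that the functions produced by the envelopes $\textbf{S}_f$ and $\textbf{S}^c_f$ genuinely lie in the stated classes and that the enlargement of the test cone is harmless, i.e. that the extra $m$-subharmonic functions on $X$ (which need not extend to a neighborhood) do not lower the envelope below the value computed with the smaller cone $\mathcal{SH}_m^o(X)$. This is exactly what the identification $\mathcal{M}_1=\mathcal{M}_2=\mathcal{J}_z^m(X)$ buys us, so the argument hinges on invoking that identification correctly; the separation step in part $(a)$ is standard once the cone structure and semicontinuity are in place.
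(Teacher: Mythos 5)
Your proposal is correct in outline, but note a structural point first: the paper does not actually prove Theorem~\ref{thm_edwards} at all --- it quotes it from Theorem~2.8 of~\cite{ACH} --- so your sketch is a reconstruction of the cited proof rather than an alternative to anything in the text. Your part $(a)$, abstract Edwards duality (Hahn--Banach/minimax) applied to the convex cone $\mathcal{SH}_m^o(X)$, which consists of continuous functions and contains the constants, is precisely the argument that citation rests on; and your part $(b)$ squeeze is the right mechanism: since $\mathcal{SH}_m^o(X)\subseteq\mathcal{SH}_m(X)\cap\mathcal{C}(X)\subseteq\mathcal{SH}_m(X)$, the three envelopes are increasing, while every competitor $v$ in the largest class and every $\mu\in\mathcal{J}_z^m(X)$ satisfy $v(z)\leq\int v\,d\mu\leq\int f\,d\mu$, so all three envelopes are capped by the common infimum and part $(a)$ closes the gap. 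One caution about how you justify that cap: you invoke the identity $\mathcal{M}_1=\mathcal{M}_2=\mathcal{J}_z^m(X)$, but the paper states that identity as a consequence of \emph{the proof of Theorem~2.8 in~\cite{ACH}}, i.e.\ of the very theorem you are proving, so citing it wholesale risks circularity. Fortunately the only inclusion your argument needs, namely $\mathcal{J}_z^m(X)\subseteq\mathcal{M}_1$ (every measure in $\mathcal{J}_z^m(X)$ satisfies the Jensen inequality against all of $\mathcal{SH}_m(X)$), is tautological: membership in $\mathcal{SH}_m(X)$ is \emph{defined} (Definition~\ref{def_msubkomp}) by the sub-mean-value inequality against exactly these measures. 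With that substitution your part $(b)$ is complete, and part $(a)$ stands modulo the Hahn--Banach separation step, which you defer to the standard literature exactly as the paper defers to~\cite{ACH}; if you want the argument self-contained, that separation step is the one piece that would still have to be written out.
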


In Theorem~\ref{peak} we shall use the following lemma.

\begin{lemma}\label{lemma_approx}
Let $X$ be a compact subset in $\mathbb C^n$, $1\leq m\leq n$, and let $f$ be a real-valued lower semicontinuous function defined
on $X$. Then there exists a sequence $u_j\in \mathcal {SH}_m^o(X)$ such that $u_j\nearrow \textbf{S}_f$, as $j\to \infty$.
\end{lemma}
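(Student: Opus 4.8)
The plan is to combine the two halves of Edwards' duality theorem (Theorem~\ref{thm_edwards}) to realise $\textbf{S}_f$ as a pointwise supremum of functions from $\mathcal{SH}_m^o(X)$, and then to extract from this supremum an increasing sequence by a Lindel\"of (Choquet-type) argument together with the fact that $\mathcal{SH}_m^o(X)$ is stable under taking finite maxima.

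First I would observe that parts $(a)$ and $(b)$ of Theorem~\ref{thm_edwards} share the same right-hand side, so that
\[
\textbf{S}_f(z) = \sup\{\psi(z): \psi \in \mathcal{SH}_m^o(X),\ \psi \le f\} =: \sup_{\psi \in \mathcal{F}} \psi(z).
\]
Two elementary properties of the family $\mathcal{F}$ will be used. On the one hand, $\mathcal{F}$ is nonempty and $\textbf{S}_f$ is real-valued: since $f$ is lower semicontinuous on the compact set $X$ it is bounded below, and every constant not exceeding $\inf_X f$ belongs to $\mathcal{F}$. On the other hand, $\mathcal{F}$ is stable under finite maxima: if $\psi_1,\psi_2 \in \mathcal{SH}_m^o(X)$ are restrictions of functions $m$-subharmonic and continuous on neighbourhoods $U_1,U_2$ of $X$, then $\max(\psi_1,\psi_2)$ is $m$-subharmonic and continuous on the neighbourhood $U_1\cap U_2$, and if moreover $\psi_1,\psi_2 \le f$ then $\max(\psi_1,\psi_2) \le f$. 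In particular $\textbf{S}_f$ is a supremum of continuous functions and hence lower semicontinuous.

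The key step is to replace $\mathcal{F}$ by a countable subfamily with the same pointwise supremum. Here I would use that $X\times\RE$ is second countable, hence Lindel\"of. Because $\textbf{S}_f$ is lower semicontinuous the set $G=\{(z,t)\in X\times\RE : t<\textbf{S}_f(z)\}$ is open, and for each $\psi\in\mathcal{F}$ the set $W_\psi=\{(z,t): t<\psi(z)\}$ is open with $W_\psi\subseteq G$. By the definition of $\textbf{S}_f$ as a supremum, the family $\{W_\psi\}_{\psi\in\mathcal{F}}$ covers $G$; extracting a countable subcover $\{W_{\psi_j}\}_{j}$ yields countably many $\psi_j\in\mathcal{F}$ with $\sup_j\psi_j=\textbf{S}_f$ pointwise on $X$. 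It is precisely this point---obtaining equality at \emph{every} point rather than merely after an upper semicontinuous regularisation---that I expect to be the main obstacle, and the reason it goes through is that the competitors $\psi$ are continuous while the envelope is lower semicontinuous.

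Finally I would set $u_j=\max(\psi_1,\dots,\psi_j)$. By the stability of $\mathcal{SH}_m^o(X)$ under finite maxima each $u_j$ lies in $\mathcal{SH}_m^o(X)$ and satisfies $u_j\le f$; by construction $u_1\le u_2\le\cdots$, and $u_j\nearrow\sup_j\psi_j=\textbf{S}_f$ as $j\to\infty$, which is the desired conclusion.
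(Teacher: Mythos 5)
Your proof is correct and follows essentially the same route as the paper: the paper's proof consists precisely of combining Edwards' duality theorem with Choquet's lemma, and your Lindel\"of argument on the open subgraph $\{(z,t): t<\textbf{S}_f(z)\}$, together with stability of $\mathcal{SH}_m^o(X)$ under finite maxima, is a self-contained proof of exactly the countable-selection step that the citation of Choquet's lemma supplies. If anything, your version is slightly sharper: the textbook form of Choquet's lemma (Klimek, Lemma~2.3.4) only yields equality of upper semicontinuous regularizations of the envelopes, whereas your use of the lower semicontinuity of $\textbf{S}_f$ and the continuity of the competitors gives the pointwise equality $\sup_j \psi_j = \textbf{S}_f$ that the statement actually requires.
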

\begin{proof}
The proof follows from Theorem~\ref{thm_edwards}, and Choquet's lemma (see e.g. \\ Lemma~2.3.4 in~\cite{klimek}).
\end{proof}

\section{The Choquet and \v{S}ilov boundaries of compact sets}\label{sec_boundary}

The Choquet boundary (Definition~\ref{def_Jzm}) of a compact set $X$ w.r.t. $\mathcal{J}_{z_0}^m$ and its topological closure
the \v{S}ilov boundary (Definition~\ref{def_BXm}) are central concepts in the Dirichlet problems studied in Section~\ref{sec_DPcont} and Section~\ref{sec_DPharmonic}, so in this section we shall characterize these boundaries in terms
of peak points and  $m$-harmonic measures.

\begin{definition}\label{def_Jzm} Let $1\leq m\leq n$, and let $X$ be a compact set in $\C^n$.
The \emph{Choquet boundary} of $X$ w.r.t. $\mathcal{J}_{z_0}^m$ is defined as
\[
O_X^m=\big\{z\in X: \mathcal{J}_{z_0}^m=\{\delta_z\} \big\}\, .
\]
\end{definition}

From Lemma~1.10 in~\cite{Gamelin} it follows that $O_X^m$ is a $G_{\delta}$-set. Let $\Omega$ be a bounded domain in $\mathbb C^n$, and set $X=\bar \Omega$. Then the Choquet boundary is contained in the topological boundary, i.e.
$O_X^m\subseteq \partial X$.

Next we introduce the concept of $m$-subharmonic peak points.

\begin{definition}\label{def_peakpoint}
Let $1\leq m\leq n$, and let $X$ be a compact set in $\C^n$. We say that a point $z\in X$ is a \emph{m-subharmonic peak point} (or simply a \emph{peak point}) if there exists a function $u\in\mathcal{SH}_m(X)$ such that $u(z)=0$, and $u(w)<0$ for $w\in X\backslash\{z\}$. The function $u$ is then called a \emph{peak function}.
\end{definition}

Using Gamelin's more general setting we can, from Theorem~1.13 in~\cite{Gamelin}, draw the conclusion that: A point $z\in X$ is a $m$-subharmonic peak point if, and only if, there exists a function $u \in \mathcal{SH}_m(X)\cap \mathcal{C}(X)$ such that $u(z)=0$ and $u(w)<0$ for $w\in X\backslash\{z\}$.

We shall later use Lemma~\ref{lem_choquet} in Theorem~\ref{thm_oregular}, and Lemma~\ref{lemma_peak2} is used in Theorem~\ref{peak}.

\begin{lemma}\label{lem_choquet}
Let $1\leq m\leq n$, and let $X$ be a compact set in $\C^n$. Then $z\in O_X^m$ if, and only if, for every $f\in \mathcal{C}(X)$ we have that
$\textbf{S}_f(z)=f(z)$.
\end{lemma}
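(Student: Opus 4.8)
The plan is to prove the two implications separately, using the envelope characterization from Theorem~\ref{thm_edwards}(b) as the central tool. Recall that for continuous $f$, that theorem gives
\[
\textbf{S}_f(z)=\inf\left\{\int f\,d\mu:\mu\in\mathcal{J}_z^m(X)\right\}.
\]
First I would record the trivial direction. If $z\in O_X^m$, then $\mathcal{J}_z^m(X)=\{\delta_z\}$ consists of the single Dirac mass, so the infimum on the right-hand side is taken over a single measure and equals $\int f\,d\delta_z=f(z)$. Hence $\textbf{S}_f(z)=f(z)$ for every $f\in\mathcal{C}(X)$, which settles the ``only if'' part immediately.

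The substantive direction is the converse. Assume $\textbf{S}_f(z)=f(z)$ for all $f\in\mathcal{C}(X)$, and suppose toward a contradiction that there exists some $\mu\in\mathcal{J}_z^m(X)$ with $\mu\neq\delta_z$. The goal is to separate $\mu$ from $\delta_z$ by a continuous test function. Since $\mu$ is a regular Borel probability measure distinct from $\delta_z$, there is a point $w\neq z$ in $\supp\mu$; using regularity and Urysohn's lemma I would construct $f\in\mathcal{C}(X)$ that is nonnegative, vanishes at $z$, and is strictly positive on a neighborhood of $w$ carrying positive $\mu$-mass. For such $f$ one gets $\int f\,d\mu>0=f(z)$. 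Combined with the envelope formula, this forces
\[
\textbf{S}_f(z)=\inf\left\{\int f\,d\nu:\nu\in\mathcal{J}_z^m(X)\right\}\le\int f\,d\mu
\]
which by itself does not yet contradict the hypothesis, so the construction must be sharper: I would instead pick $f$ so that $f(z)=0$ while $\int f\,d\nu>0$ for every $\nu\in\mathcal{J}_z^m(X)$, or more directly argue that the hypothesis $\textbf{S}_f(z)=f(z)$ for all $f$ forces $\int f\,d\mu\ge f(z)$ with equality reachable, and then exploit that $\delta_z$ is itself admissible to pin the infimum at $f(z)$. The clean route is: because $\delta_z\in\mathcal{J}_z^m(X)$ always, the infimum is automatically $\le f(z)$; the hypothesis says it equals $f(z)$, i.e. every $\nu\in\mathcal{J}_z^m(X)$ satisfies $\int f\,d\nu\ge f(z)$ for all continuous $f$. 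Applying this to both $f$ and $-f$ yields $\int f\,d\nu=f(z)$ for all $f\in\mathcal{C}(X)$, and by the Riesz representation theorem this determines $\nu=\delta_z$.

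The main obstacle I anticipate is the direction of the inequality in the envelope formula: since $\delta_z$ is always a competitor, $\textbf{S}_f(z)\le f(z)$ holds for free, so the hypothesis genuinely carries information only through the reverse inequality $\textbf{S}_f(z)\ge f(z)$, which must be leveraged to control \emph{every} Jensen measure at once. The care lies in converting ``$\int f\,d\nu\ge f(z)$ for all continuous $f$'' into ``$\int f\,d\nu=f(z)$ for all continuous $f$'' by testing against $\pm f$, and then invoking that a regular Borel probability measure on a compact metric space is uniquely determined by the integrals of continuous functions. Once that identification is in place, $\nu=\delta_z$ for every $\nu\in\mathcal{J}_z^m(X)$, i.e. $\mathcal{J}_z^m(X)=\{\delta_z\}$, which is exactly the statement $z\in O_X^m$.
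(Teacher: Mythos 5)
Your proof is correct, and it is worth noting that the paper gives no argument of its own for this lemma: its ``proof'' is a pointer to page~10 of Gamelin's book, so your write-up is a genuine, self-contained filling-in of that citation. The substance of your argument is the standard duality route, presumably the same mechanism as in Gamelin's abstract setting: by Theorem~\ref{thm_edwards}(b), $\textbf{S}_f(z)=\inf\left\{\int f\,d\nu : \nu\in\mathcal{J}_z^m(X)\right\}$; since $\delta_z\in\mathcal{J}_z^m(X)$ always, the hypothesis $\textbf{S}_f(z)=f(z)$ for all $f\in\mathcal{C}(X)$ is equivalent to saying that $\int f\,d\nu\geq f(z)$ for every $\nu\in\mathcal{J}_z^m(X)$ and every continuous $f$; testing against both $f$ and $-f$ gives $\int f\,d\nu=f(z)$ for all $f\in\mathcal{C}(X)$, and Riesz uniqueness then yields $\nu=\delta_z$, so $\mathcal{J}_z^m(X)=\{\delta_z\}$. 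Two small remarks. First, your abandoned opening attempt (Urysohn with $f\geq 0$ vanishing at $z$ and positive near a point of $\supp\mu$) fails exactly for the sign reason you yourself identified: with $f\geq 0$ the conclusion $\int f\,d\mu>f(z)$ is perfectly consistent with the hypothesis. Had you wanted to persist with a separation argument, you would instead need $f\leq 0$, $f(z)=0$, and $f<0$ near a point of $\supp\mu$, which contradicts $\int f\,d\mu\geq f(z)$ directly; but your eventual ``clean route'' via $\pm f$ makes this unnecessary and is cleaner. Second, you should record the trivial observation that continuous functions are lower semicontinuous, so that Theorem~\ref{thm_edwards} indeed applies to every $f\in\mathcal{C}(X)$.
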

\begin{proof} Cf. page~10 in~\cite{Gamelin}.
\end{proof}

\begin{lemma}\label{lemma_peak2}
Let $1\leq m\leq n$, and let $X$ be a compact set in  $\C^n$. A point $z\in X$ is a $m$-subharmonic peak point if, and only if,
for any neighborhood $V$ of $z$ there exists $u\in \mathcal {SH}_m(X)$ such that $u<0$, $u(z)=-1$ and $u\leq -2$ on $X\setminus V$.
\end{lemma}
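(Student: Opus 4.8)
The plan is to prove the equivalence in Lemma~\ref{lemma_peak2} by establishing both implications, where the forward direction is essentially immediate and the reverse direction requires an explicit construction by summing a sequence of localized functions.

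For the forward implication, suppose $z$ is an $m$-subharmonic peak point. By the remark following Definition~\ref{def_peakpoint}, there is a \emph{continuous} peak function $v\in\mathcal{SH}_m(X)\cap\mathcal C(X)$ with $v(z)=0$ and $v(w)<0$ for all $w\in X\setminus\{z\}$. Given any neighborhood $V$ of $z$, the set $X\setminus V$ is compact and $v$ is continuous and strictly negative there, so $v$ attains a negative maximum $-c<0$ on $X\setminus V$. I would then take $u=\lambda v$ for a suitable constant $\lambda>0$ chosen so that $\lambda c\geq 2$; after this rescaling $u(z)=0$, not $-1$, so I would instead set $u=\lambda v-1$ with $\lambda$ large, giving $u(z)=-1$, $u<0$ everywhere (since $v\le 0$), and $u\le-\lambda c-1\le-2$ on $X\setminus V$. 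This $u$ lies in $\mathcal{SH}_m(X)$ because it is an affine function of $v$ with positive leading coefficient, and constants together with positive scalar multiples preserve $m$-subharmonicity.

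For the reverse implication, assume that for every neighborhood $V$ of $z$ such a function exists. The strategy is to produce a single peak function by summing a carefully chosen sequence. I would fix a decreasing sequence of neighborhoods $V_j$ with $\bigcap_j V_j=\{z\}$, and for each $j$ take the hypothesized $u_j\in\mathcal{SH}_m(X)$ satisfying $u_j<0$, $u_j(z)=-1$, and $u_j\le-2$ on $X\setminus V_j$. The candidate peak function is a weighted sum $u=\sum_j 2^{-j}u_j$, or a similar normalization; I expect the series to converge uniformly to a function in $\mathcal{SH}_m(X)$ (using that decreasing limits, or uniformly convergent sums with appropriate control, of $m$-subharmonic functions on $X$ remain $m$-subharmonic, which follows from the Jensen-measure definition via monotone or dominated convergence inside the integral). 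At $z$ every term equals $-2^{-j}$, so $u(z)$ is a fixed negative value. For any $w\ne z$, there is some index $j_0$ with $w\notin V_{j_0}$, forcing $u_{j_0}(w)\le-2$ while $u_{j_0}(z)=-1$, so the $j_0$-th term is strictly more negative at $w$ than at $z$; since all other terms are $\le 0$, this yields $u(w)<u(z)$, and after normalizing $u$ to take value $0$ at $z$ we obtain a genuine peak function.

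The main obstacle is the convergence-and-closure step in the reverse direction: I must ensure the infinite sum $\sum_j 2^{-j}u_j$ lands back in $\mathcal{SH}_m(X)$ and that the strict inequality $u(w)<u(z)$ survives the passage to the limit for every $w\ne z$. The first point should follow from the fact that $\mathcal{SH}_m(X)$ is closed under uniformly convergent sums of nonpositive members, again via the defining inequality against Jensen measures and the monotone convergence theorem; the functions $u_j$ are uniformly bounded below in a controlled way only near $z$, so some care is needed to guarantee uniform convergence on all of $X$, which is why the geometric weights $2^{-j}$ are essential. The strictness at each $w\ne z$ is handled pointwise by the index $j_0$ argument above, so the genuine difficulty is purely the analytic one of justifying that the limit function is admissible and continuous enough to qualify as a peak function.
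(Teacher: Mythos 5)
Your forward implication is fine and is exactly what the paper dismisses as immediate: take a peak function $v$, note that $\max_{X\setminus V}v=-c<0$ by upper semicontinuity and compactness, and use $u=\lambda v-1$ with $\lambda c\geq 1$.

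The reverse implication, however, has a genuine gap at the decisive step. From ``$u_{j_0}(w)\leq -2$ while $u_{j_0}(z)=-1$, and all other terms are $\leq 0$'' you conclude $u(w)<u(z)$. This inference is invalid: at $z$ the other terms are not $0$ but $-2^{-j}$, so the other terms at $w$ (which are merely $\leq 0$) can each be \emph{larger} than the corresponding terms at $z$. Quantitatively, if $j_1$ is the first index with $w\notin V_{j_1}$, the hypothesis tells you nothing about $u_j(w)$ for $j<j_1$ beyond $u_j(w)<0$; these values can be arbitrarily close to $0$, in which case $u(w)$ is approximately $-2\sum_{j\geq j_1}2^{-j}=-2^{-j_1+2}$, which tends to $0$ as $j_1$ grows, while $u(z)=-\sum_j 2^{-j}$ is a fixed negative number. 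So for $w$ close to $z$ (large $j_1$) the sum need not peak at $z$ at all. The flaw is structural, not technical: you choose the $u_j$ independently from a sequence of neighborhoods fixed in advance, so early terms can be near $0$ precisely at points where only late, lightly weighted terms are $\leq -2$. The paper's proof supplies exactly the missing compensation mechanism: the $u_j$ are chosen \emph{inductively}, with $u_{j+1}$ required to be $\leq -2$ not only on a piece $Y_j$ of an exhaustion of $X\setminus\{z_0\}$ by closed sets, but also on the closed set $U_j=\{w\in X:\max_{1\leq l\leq j}u_l(w)\geq -1+\epsilon_j\}$ where the previously chosen functions are too large; that is, the hypothesis is applied to the adaptively chosen neighborhood $X\setminus(Y_j\cup U_j)$ of $z_0$, which is legitimate since $z_0\notin Y_j\cup U_j$ and that set is closed. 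Then at any $w\neq z_0$, either all terms satisfy $u_j(w)\leq -1$ with at least one $\leq -2$, or else $w\in U_l\setminus U_{l-1}$ for some $l$, in which case every term after the $l$-th is $\leq -2$; with weights $\frac{1}{4}\left(\frac{3}{4}\right)^j$ and the requirement $\epsilon_l<\frac{1}{2}\left(\frac{3}{4}\right)^l\big/\left(1-\left(\frac{3}{4}\right)^l\right)$ this yields the strict bound $u(w)<-1=u(z_0)$ in all cases. By contrast, the convergence and closure issues you flag as the ``main obstacle'' are minor: the partial sums are $m$-subharmonic and decreasing (all terms are negative), so the limit is upper semicontinuous and satisfies the Jensen inequality by monotone convergence; no uniform convergence is needed.
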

\begin{proof} The implication $\Rightarrow$ is immediate. To prove the converse implication take $z_0\in X$, and let $\epsilon_j\searrow 0$, as $j\to \infty$, be a  sequence such that
\[
\epsilon_j<\frac 12\frac {\left(\frac 34\right)^j}{1-\left (\frac {3}{4}\right)^j}\, .
\]
Let $Y_n\subseteq X$ be closed subsets such that
\[
X\setminus \{z_0\}=\bigcup _{n=1}^{\infty}Y_n\, .
\]
Now we shall define a sequence of functions from $\mathcal {SH}_m(X)$. Let $u_0=-1$. Suppose that we already have chosen $u_0,\dots,u_j\in \mathcal {SH}_m(X)\cap \mathcal C(X)$, such that for $1\leq k\leq j$ we have that $u_k<0$, $u_k(z_0)=-1$, and
\[
u_k\leq -2 \qquad  \text{on } U_{k-1}\cup Y_{k-1}\, ,
\]
where
\[
U_{k-1}=\left\{w\in X: \max_{1\leq l\leq k-1} u_l(w)\geq -1+\epsilon_{k-1} \right\}\, .
\]
Note that $\{U_k\}$ is an increasing sequence of closed sets. Now take a function $u_{j+1}\in \mathcal {SH}_m(X)$ such that $u_{j+1}\leq -2$ on $Y_j\cup U_j$,
and $u_{j+1}(z_0)=-1$. Let us then define
\[
u(z)=\frac 14\sum_{j=0}^{\infty}\left(\frac 34\right)^ju_j\, .
\]
This construction then implies that $u\in \mathcal {SH}_m(X)$ and $u(z_0)=-1$. Now suppose that $w\neq z_0$ and $w\notin \bigcup_{k=1}^{\infty} U_k$, that $u_j(w)\leq -1$ for all $j$, and $u_j(w)\leq -2$ for at least one $j$, therefore $u(w)<-1$. Assume next that $w\in \bigcup_{k=1}^{\infty} U_k$. If $w\in U_l\setminus U_{l-1}$, then
\[
\begin{aligned}
u_j(w)&\leq -1+\epsilon_{l-1}, \ \text{for} \ 1\leq j\leq l-1,\\
u_l(w)&<0, \\
u_j(w)&\leq -2, \ \text{for} \ j>l.
\end{aligned}
\]
Now we have that
\[
\begin{aligned}
u(w)&\leq \frac 14 \left((-1+\epsilon_l)\sum_{j=0}^{l-1}\left(\frac 34\right)^j-2\sum_{j=l+1}^{\infty}\left(\frac 34\right)^j\right)\\
&=-1+\epsilon_{l}\left(1-\left(\frac 34\right)^l\right)-\frac 12\left(\frac {3}{4}\right)^l<-1.
\end{aligned}
\]
This means that $u+1$ is a peak function.
\end{proof}

In Theorem~\ref{peak} we characterize the Choquet boundary of $X$ w.r.t. $\mathcal{J}_{z_0}^m$
in terms of peak points.

\begin{theorem}\label{peak}
Let $1\leq m\leq n$, and let $X$ be a compact set in  $\C^n$. Then $z\in O_X^m$ if, and only if, $z$ is a peak point.
\end{theorem}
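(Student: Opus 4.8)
The plan is to prove both implications, using the peak-point characterizations and envelope duality assembled in the previous section. Throughout I shall work with the equivalent formulation of peak points given by Lemma~\ref{lemma_peak2}, since it provides a built-in ``separation'' quantitatively ($u(z)=-1$, $u\leq -2$ off a neighborhood) that meshes well with the Jensen-measure definition of $O_X^m$.

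First I would prove the implication ($z\in O_X^m \Rightarrow z$ is a peak point). The hypothesis $\mathcal{J}_z^m(X)=\{\delta_z\}$ together with Lemma~\ref{lem_choquet} tells us that $\textbf{S}_f(z)=f(z)$ for every $f\in\mathcal{C}(X)$. I would use this with a carefully chosen continuous test function $f$ designed to be small at $z$ and large away from a given neighborhood $V$ of $z$. Concretely, pick $f\in\mathcal C(X)$ with $f(z)=-1$ and $f\leq -2$ on $X\setminus V$, say a continuous function interpolating between these values. Then $\textbf{S}_f(z)=f(z)=-1$, and by Lemma~\ref{lemma_approx} there exists a sequence $u_j\in\mathcal{SH}_m^o(X)$ with $u_j\nearrow\textbf{S}_f$. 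For $j$ large enough one of these $u_j$ will satisfy $u_j(z)>-1-\varepsilon$ while $u_j\leq f\leq -2$ on $X\setminus V$; after a harmless affine normalization this yields exactly the function required by Lemma~\ref{lemma_peak2}, so $z$ is a peak point.

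Next I would prove the converse ($z$ a peak point $\Rightarrow z\in O_X^m$). Suppose $z$ is a peak point and let $\mu\in\mathcal{J}_z^m(X)$; the goal is to show $\mu=\delta_z$. Fix a peak function $u\in\mathcal{SH}_m(X)\cap\mathcal C(X)$ with $u(z)=0$ and $u<0$ on $X\setminus\{z\}$ (using the continuous version of peaking noted after Definition~\ref{def_peakpoint}). Since $\mu$ is a Jensen measure and $u\in\mathcal{SH}_m(X)$, we have $0=u(z)\leq\int_X u\,d\mu\leq 0$, forcing $\int_X u\,d\mu=0$. Because $u\leq 0$ with strict inequality off $z$, this integral identity forces $\mu$ to be supported on the set $\{u=0\}=\{z\}$, and since $\mu$ is a probability measure we conclude $\mu=\delta_z$. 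Hence $\mathcal{J}_z^m(X)=\{\delta_z\}$, i.e. $z\in O_X^m$.

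The main obstacle is the first implication: one must extract from the purely value-theoretic statement $\textbf{S}_f(z)=f(z)$ an \emph{actual} $m$-subharmonic function that peaks, rather than merely an approximating supremum. This is precisely where Lemma~\ref{lemma_approx} does the heavy lifting, converting the envelope equality into a concrete approximating sequence from $\mathcal{SH}_m^o(X)$; the remaining work is the routine bookkeeping of choosing $f$ and the cutoff thresholds so that a single member of the sequence meets the hypotheses of Lemma~\ref{lemma_peak2}. The converse direction, by contrast, is essentially immediate from the definition of a Jensen measure once the continuous peak function is in hand.
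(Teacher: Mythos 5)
Your proposal is correct and follows essentially the same route as the paper: the forward direction uses a continuous peak function to force $\operatorname{supp}\mu\subseteq\{u=0\}=\{z\}$, and the converse combines Lemma~\ref{lem_choquet}, Lemma~\ref{lemma_approx}, and Lemma~\ref{lemma_peak2} exactly as in the paper's proof. One small caution on your ``harmless affine normalization'': with your thresholds ($f\leq -2$ on $X\setminus V$ and $u_j(z)>-1-\varepsilon$) a pure rescaling $u_j/|u_j(z)|$ only gives $-2/(1+\varepsilon)>-2$ off $V$, so you genuinely need the shift (e.g. $u_j\mapsto 2u_j+b$ with $b=-1-2u_j(z)$), whereas the paper avoids this by demanding $f<-4$ on $X\setminus V$ so that division by $|v(z_0)|<2$ already suffices.
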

\begin{proof}
If $z$ is a peak point, then there exists a function $u\in \mathcal {SH}_m(X)\cap \mathcal C(X)$ such that $u(z)=0$ and $u(w)<0$, for all $w\neq z$.
Let $\mu\in \mathcal J_z^m$. Then it holds that
\[
0=u(z)\leq \int_Xu\,d\mu\leq 0\, ,
\]
which implies that $\operatorname{supp}\mu\subseteq \{w: u(w)=0\}$. Hence, $\mu=\delta_z$.

On the other hand, let $z_0\in O_X^m$ and let $V$ be any neighborhood of $z_0$. Furthermore, let $f\in \mathcal C(X)$ be such that $f<0$, $f(z_0)=-1$, and $f<-4$ on $X\setminus V$. Then, $\textbf{S}_f(z_0)=-1$, and $\textbf{S}_f<-4$ on $X\setminus V$. From Lemma~\ref{lemma_approx} it follows that one can find a function $v\in \mathcal {SH}_m^o(X)$ such that $v\leq \textbf{S}_f$, and $-1>v(z_0)>-2$. Then the function $u$ defined by
\[
u(z)=-\frac {v(z)}{v(z_0)}
\]
satisfies $u(z_0)=-1$, and $u<-2$ on $X\setminus V$. The proof is finished by Lemma~\ref{lemma_peak2}.
\end{proof}

\begin{remark} It follows from Theorem~\ref{peak} that $O_X^m$ is non-empty.
\end{remark}

Next, we introduce the \v{S}ilov boundary of a compact set $X$ w.r.t. $\mathcal{J}_{z_0}^m$.

\begin{definition}\label{def_BXm} Let $1\leq m\leq n$, and let $X$ be a compact set in $\C^n$. The \emph{\v{S}ilov boundary, $B_X^m$, of $X$} is defined to be the topological closure of $O_X^m$.
\end{definition}

\begin{remark}
Obviously, it holds that $O_X^m\subseteq B^m_X$, and $O_X^m$ is closed if, and only if, $O_X^m=B^m_X$.
\end{remark}

It is not always true that a $m$-subharmonic function must attain its maximum on the potential boundary, $B^m_X$ (see the example before Theorem~4.3 in~\cite{PoletskySigurdsson1}). But we have at least the following weak maximum principle that we shall use in our study of the \v{S}ilov boundary and the $m$-harmonic measure.

\begin{theorem}\label{thm_weakmaxprinciple}
Let $1\leq m\leq n$, and let $X$ be a compact set in $\C^n$. If $u\in\mathcal{SH}_m(X)$, then
\[
u(z)\leq \sup_{w\in B^m_X} u(w)\qquad \text{ for all } z\in X\, .
\]
\end{theorem}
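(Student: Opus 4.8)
The plan is to reduce the statement to the existence, for each $z\in X$, of a \emph{single} Jensen measure $\mu\in\mathcal J_z^m$ that is carried by the \v Silov boundary. Suppose such a $\mu$ has been found with $\operatorname{supp}\mu\subseteq B_X^m$. Since $u\in\mathcal{SH}_m(X)$ and $\mu\in\mathcal J_z^m$, the defining inequality of $m$-subharmonicity on compact sets gives $u(z)\le\int_X u\,d\mu=\int_{B_X^m}u\,d\mu\le\sup_{w\in B_X^m}u(w)$, where the last step only uses that $\mu$ is a probability measure. As $z\in X$ is arbitrary, this is precisely the claimed inequality. Hence the entire content of the theorem lies in producing one representing measure supported on $B_X^m=\overline{O_X^m}$.

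To obtain such a measure I would invoke Choquet's theory for the convex cone $\mathcal{SH}_m(X)\cap\mathcal C(X)$ of continuous $m$-subharmonic functions, which by the remark following Definition~\ref{def_msubkomp} generates the same Jensen measures as $\mathcal{SH}_m^o(X)$ and which separates the points of $X\subseteq\C^n$ (for instance the real parts of $\C$-linear functions and the functions $z\mapsto|z-a|^2$ are plurisubharmonic, hence $m$-subharmonic, and continuous). Order the probability measures on $X$ by the associated balayage order, declaring $\nu_1\preceq\nu_2$ when $\int\phi\,d\nu_1\le\int\phi\,d\nu_2$ for every $\phi$ in this cone. A routine application of Zorn's lemma produces a $\preceq$-maximal measure $\mu$ with $\delta_z\preceq\mu$; the relation $\delta_z\preceq\mu$ says exactly that $\phi(z)\le\int\phi\,d\mu$ for all continuous $m$-subharmonic $\phi$, so $\mu\in\mathcal J_z^m$ and $\mu$ represents $z$.

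The main obstacle is the final step: showing that a $\preceq$-maximal measure is carried by $B_X^m=\overline{O_X^m}$. This is the heart of the matter, and it is where the Choquet boundary enters through Lemma~\ref{lem_choquet}. The idea is that, for $f\in\mathcal C(X)$, the gap $f-\textbf{S}_f$ vanishes at $z$ precisely when $z\in O_X^m$, while Edwards' theorem (Theorem~\ref{thm_edwards}) identifies $\textbf{S}_f$ with the lower Jensen envelope of $f$; a maximal measure must assign no mass to the region where this gap is positive, and since $X$ is metrizable and $O_X^m$ is a $G_\delta$-set one can localize the support of $\mu$ inside $\overline{O_X^m}$. I would carry this out by the standard Mokobodzki/Bishop--de Leeuw argument, or simply quote the corresponding representation result in Gamelin's model~\cite{Gamelin}; combined with Theorem~\ref{peak}, which guarantees $O_X^m\neq\emptyset$, this yields the desired measure and completes the proof.
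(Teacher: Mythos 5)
Your reduction is correct: once one Jensen measure $\mu\in\mathcal{J}_z^m$ with $\operatorname{supp}\mu\subseteq B_X^m$ is produced for each $z\in X$, the definition of $\mathcal{SH}_m(X)$ immediately gives $u(z)\leq\int u\,d\mu\leq\sup_{B_X^m}u$, and your route to such a measure (Zorn's lemma in the subordination order, then the Bishop--de Leeuw/Mokobodzki localization of maximal measures into the closure of the $G_\delta$ Choquet boundary, using that $X\subseteq\C^n$ is compact metrizable) is the standard Choquet-theoretic argument. The comparison with the paper is somewhat degenerate: the paper offers no argument at all, its proof being the single line ``See Theorem~1.12 in~\cite{Gamelin}'', so what you have written is essentially a reconstruction of the proof behind that citation. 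In particular, you buy a view of the logical structure that the paper hides, and you correctly navigated the one real trap: within the paper, the existence of measures in $\mathcal{J}_z^{b,m}$ (Theorem~\ref{thm_propBXm}, Corollary~\ref{cor_propBXm}, Corollary~\ref{cor_JzBm}) is \emph{deduced from} the weak maximum principle, so invoking any of those here would be circular, and you instead construct the boundary-supported measure independently.

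Two cautions. First, your fallback option of ``simply quoting the corresponding representation result in Gamelin's model'' needs care: if the result you have in mind is Gamelin's Theorem~1.17 (the paper's Lemma~\ref{lem_poisson}), you must check that its proof in~\cite{Gamelin} does not itself pass through the weak maximum principle, otherwise the circularity you avoided inside the paper reappears inside the citation; the self-contained, safe version of your proof is the maximal-measure argument you sketch, and the real work there is the dilation lemma showing that a maximal $\mu$ satisfies $\int f\,d\mu=\int \textbf{S}_f\,d\mu$ for all $f\in\mathcal{C}(X)$, which you gesture at but do not prove. Second, a minor slip: by Lemma~\ref{lem_choquet}, $z\in O_X^m$ is equivalent to $\textbf{S}_f(z)=f(z)$ holding for \emph{every} $f\in\mathcal{C}(X)$, not to the vanishing of the gap for a single $f$; the localization argument accordingly needs a countable dense family of test functions, which metrizability supplies. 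Neither point breaks the proof, but both belong in a complete write-up.
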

\begin{proof} See Theorem~1.12 in~\cite{Gamelin}.
\end{proof}

\begin{definition}\label{def_harmonic measure} Let $1\leq m\leq n$, and let $X$ be a compact set in $\C^n$. The \emph{$m$-harmonic measure} of a subset $E\subseteq X$ is defined as the function
\[
\omega(z,E,X)=\inf_{V\supset E\atop V open}\sup_{\mu\in \mathcal{J}_z^m} \mu (V)\, .
\]
\end{definition}

We have the following estimate.

\begin{theorem}\label{twoconstanttheorem}
Let $1\leq m\leq n$, $K$ and $k$ be constants and let $X$ be a compact set in $\C^n$. If $u\in \mathcal {SH}_m(X)$ satisfies $u\leq K$ on $X$, and $u\leq k<K$ on some set $Y\subseteq X$, then
\[
u(z)\leq k\omega(z,Y,X)+K(1-\omega(z,Y,X)),\qquad z\in X\, .
\]
\end{theorem}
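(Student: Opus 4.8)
The plan is to prove this two-constants-type theorem by reducing it to the weak maximum principle (Theorem~\ref{thm_weakmaxprinciple}) applied to a cleverly constructed auxiliary $m$-subharmonic function. The essential idea is that the $m$-harmonic measure $\omega(z,Y,X)$ should itself behave like an $m$-subharmonic function in $z$, and that the combination $k\omega + K(1-\omega)$ is the natural upper envelope we want to compare $u$ against.

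First I would unwind the definition of the $m$-harmonic measure. Fix an open set $V\supseteq Y$ and consider the quantity $\sup_{\mu\in\mathcal{J}_z^m}\mu(V)$. Since the indicator $\mathbf{1}_V$ is lower semicontinuous, one can approximate it from below by continuous functions, and by Theorem~\ref{thm_edwards} the relevant sup over Jensen measures connects to an envelope construction. Concretely, for a continuous $f$ with $0\leq f\leq \mathbf{1}_V$ and $f\equiv 1$ on $Y$, the function $z\mapsto \int f\,d\mu$ controls the mass, and $\mathbf{S}_{-f}$ (or rather the envelope of $-f$) produces an $m$-subharmonic minorant. The key structural fact I would establish is that $z\mapsto \sup_{\mu\in\mathcal{J}_z^m}\mu(V)$ is, up to sign and the relevant supremum/infimum, expressible through the envelopes of Definition~\ref{def}, so that $-\omega(\cdot,Y,X)$ is a decreasing limit of $m$-subharmonic functions and hence the upper envelope behaves correctly.

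The main step is then the comparison. Set $g(z)=k\,\omega(z,Y,X)+K\bigl(1-\omega(z,Y,X)\bigr)=K-(K-k)\omega(z,Y,X)$. Since $K-k>0$, proving $u\leq g$ is equivalent to showing $(K-k)\omega(\cdot,Y,X)\leq K-u$ on $X$. The idea is to fix an arbitrary open $V\supseteq Y$ and an arbitrary Jensen measure $\mu\in\mathcal{J}_z^m$, then estimate $\int u\,d\mu$ by splitting the integral over $V$ and over $X\setminus V$. On $Y$ we have $u\leq k$, and outside we have $u\leq K$; a careful bookkeeping using $u\leq K$ everywhere gives $\int u\,d\mu\leq k\,\mu(V)+K(1-\mu(V))$ after handling the fact that $u\leq k$ holds on $Y$ rather than on all of $V$. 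Since $u$ is $m$-subharmonic on $X$, we have $u(z)\leq\int u\,d\mu$, and taking the supremum over $\mu$ and then the infimum over $V\supseteq Y$ yields exactly $u(z)\leq k\,\omega(z,Y,X)+K(1-\omega(z,Y,X))$.

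The hard part will be the subtlety that $u\leq k$ is assumed only on $Y$, not on the open neighborhood $V$, while the measure of $V$ (not of $Y$) is what enters $\omega$. To bridge this gap I expect to use an approximation argument: replace $u\leq k$ on $Y$ by a continuous minorant and exploit upper semicontinuity of $u$ together with the inner regularity of the measures $\mu$, so that for each $\varepsilon>0$ one can shrink to a neighborhood where $u<k+\varepsilon$, control the contribution of the annular region $V\setminus Y$ by the excess mass, and let $\varepsilon\to 0$. Managing this limit uniformly over all $\mu\in\mathcal{J}_z^m$ before taking the infimum over $V$ is the delicate technical point; everything else reduces to the defining inequality of $\mathcal{SH}_m(X)$ and the structure of $\omega$.
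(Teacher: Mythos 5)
Your core argument (the comparison step in your third paragraph together with the $\varepsilon$-fix in your fourth) is essentially the paper's own proof: fix $\epsilon>0$, use upper semicontinuity of $u$ to get the relatively open set $U=\{u<k+\epsilon\}\supseteq Y$, pick an open $V$ with $Y\subseteq V\subseteq U$ and a measure $\mu\in\mathcal{J}_z^m$ with $\mu(V)$ nearly equal to $\sup_{\mu}\mu(V)\geq\omega(z,Y,X)$, split $\int_X u\,d\mu$ over $V$ and $X\setminus V$, and let $\epsilon\to 0^+$. Two remarks: the scaffolding in your first two paragraphs (reduction to the weak maximum principle, and realizing $-\omega$ as a limit of $m$-subharmonic functions via Edwards' theorem) is never used and can be dropped; and the ``delicate'' uniformity over all $\mu\in\mathcal{J}_z^m$ that you worry about is a non-issue, because the bound $(k+\epsilon)\mu(V)+K\bigl(1-\mu(V)\bigr)$ is affine and decreasing in $\mu(V)$ (as $k+\epsilon<K$), so a single near-optimal measure suffices.
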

\begin{proof}
Fix $\epsilon >0$, and set
\[
U=\{z\in X: u(z)<k+\epsilon\}\, .
\]
Then $U$ is an open set such that $Y\subseteq U$. For all $z\in X$, there exists a measure $\mu\in \mathcal J_z^m(X)$, and an open set $V$, such that
$Y\subseteq V\subseteq U$ and
\[
\omega(z,Y,X)\leq \mu(V)\leq \omega(z,Y,X)+\epsilon\, .
\]
Then we have that
\begin{multline*}
u(z)\leq \int_Xu\,d\mu=\int_Vu\,d\mu+\int_{X\setminus V}u\,d\mu\\ \leq (k+\epsilon)\big(\omega(z,Y,X)+\epsilon)+K(1-\omega(z,Y,X)\big)\, .
\end{multline*}
If we let $\epsilon \to 0^+$, then we get the desired conclusion.
\end{proof}

\begin{theorem}\label{thm_propBXm} Let $1\leq m\leq n$, and let $X$ be a compact set in $\C^n$. The \v{S}ilov boundary, $B^m_X$, is the smallest closed set $E$ such that $\omega(z,E,X)$ is identically 1.
\end{theorem}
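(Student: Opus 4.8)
The plan is to verify the two properties that make $B^m_X$ the \emph{smallest} closed set with $\omega(\cdot,E,X)\equiv 1$: first, that $E=B^m_X$ itself satisfies $\omega(z,B^m_X,X)=1$ for every $z\in X$, and second, that every closed set $E$ with $\omega(\cdot,E,X)\equiv 1$ contains $B^m_X$. Since $B^m_X=\overline{O_X^m}$ is closed, and as a closed subset of the compact set $X$ it is compact, the containment in the second property reduces to showing $O_X^m\subseteq E$ and then taking closures.

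For the minimality part I would proceed as follows. Fix a closed $E$ with $\omega(\cdot,E,X)\equiv 1$ and a point $z_0\in O_X^m$. By Theorem~\ref{peak} the point $z_0$ is a peak point, so there is $u\in\mathcal{SH}_m(X)\cap\mathcal C(X)$ with $u(z_0)=0$ and $u(w)<0$ for $w\neq z_0$; in particular $\sup_X u=0$. Put $k=\sup_E u$, so that $k\le 0$, and apply the two-constant theorem (Theorem~\ref{twoconstanttheorem}) with $K=0$. If $k<0$ it would give $u(z)\le k\,\omega(z,E,X)=k$ for all $z$, which at $z=z_0$ contradicts $u(z_0)=0$; hence $\sup_E u=0$. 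As $u$ is continuous and $E$ compact, this supremum is attained at some $w_0\in E$ with $u(w_0)=0$, forcing $w_0=z_0$. Thus $z_0\in E$, so $O_X^m\subseteq E$ and therefore $B^m_X=\overline{O_X^m}\subseteq E$.

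The harder part, which I expect to be the main obstacle, is proving $\omega(z,B^m_X,X)=1$. Writing $W=X\setminus V$ for an open $V\supseteq B^m_X$ and using that each $\mu\in\mathcal{J}_z^m$ is a probability measure, one has $\mu(V)=1-\mu(W)$, so $\sup_\mu\mu(V)=1-\inf_\mu\mu(W)$; it therefore suffices to show $\inf_{\mu\in\mathcal{J}_z^m}\mu(W)=0$ for every such $V$. Here $W$ is compact and disjoint from the closed set $B^m_X$, so I would choose an open set $V'$ with $W\subseteq V'$ and $\overline{V'}\cap B^m_X=\emptyset$. Since $W\subseteq V'$ we have $\mu(W)\le\mu(V')$, and as $\chi_{V'}$ is lower semicontinuous, Theorem~\ref{thm_edwards}(b) identifies $\inf_{\mu}\mu(V')=\inf_\mu\int\chi_{V'}\,d\mu=\textbf{S}_{\chi_{V'}}(z)$.

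It then remains to compute $\textbf{S}_{\chi_{V'}}(z)=0$. The constant competitor $v\equiv 0$ gives $\textbf{S}_{\chi_{V'}}(z)\ge 0$. Conversely, any $v\in\mathcal{SH}_m(X)$ with $v\le\chi_{V'}$ satisfies $v\le 0$ on $B^m_X$, since $B^m_X\cap V'=\emptyset$ forces $\chi_{V'}=0$ there; the weak maximum principle (Theorem~\ref{thm_weakmaxprinciple}) then yields $v(z)\le\sup_{B^m_X}v\le 0$, so $\textbf{S}_{\chi_{V'}}(z)\le 0$. Combining, $\inf_\mu\mu(W)\le\textbf{S}_{\chi_{V'}}(z)=0$, whence $\sup_\mu\mu(V)=1$ for every open $V\supseteq B^m_X$, and taking the infimum over $V$ gives $\omega(z,B^m_X,X)=1$. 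The delicate points are the passage from a supremum over measures to an infimum over measures via complementation, and the topological separation of $W$ from $B^m_X$ that lets the weak maximum principle collapse the envelope to zero.
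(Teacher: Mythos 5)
Your proof is correct, and it diverges from the paper's in an interesting way, so let me compare. For the first half ($\omega(z,B^m_X,X)=1$) you use exactly the paper's two ingredients, Edwards' duality (Theorem~\ref{thm_edwards}) and the weak maximum principle (Theorem~\ref{thm_weakmaxprinciple}), but you organize them directly rather than by contradiction: you pass to the complement $W=X\setminus V$, fatten it to an open set $V'$ whose closure misses $B^m_X$ so that $\chi_{V'}$ is lower semicontinuous, and compute $\textbf{S}_{\chi_{V'}}\equiv 0$ outright, which dualizes to $\inf_{\mu}\mu(W)=0$. The paper instead assumes $\omega(z_0,B^m_X,X)<1$, builds a continuous $f$ equal to $-1$ on a neighborhood of $B^m_X$ and $0$ off $V$, and produces from $\textbf{S}_f$ a function violating the maximum principle; the two arguments are dual formulations of the same idea, yours being arguably cleaner since it avoids the constant bookkeeping. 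The genuine divergence is in the minimality half: the paper's argument is a one-liner --- pick $z_0\in O_X^m\setminus E$, note $\mathcal{J}_{z_0}^m=\{\delta_{z_0}\}$ by the definition of the Choquet boundary, and choose a neighborhood $V$ of $E$ missing $z_0$, so $\omega(z_0,E,X)=0$ --- whereas you invoke Theorem~\ref{peak} to get a continuous peak function at $z_0$ and then the two-constant theorem (Theorem~\ref{twoconstanttheorem}) with $K=0$, $k=\sup_E u$ to force $\sup_E u=0$, hence $z_0\in E$ by compactness of $E$ and strictness of the peak. Your route is heavier (two nontrivial theorems where the paper needs only the definition), but it is sound, and it has one merit worth noting: you prove the literal ``smallest'' statement, that \emph{every} closed $E$ with $\omega(\cdot,E,X)\equiv 1$ contains $B^m_X$, whereas the paper's text as written only excludes proper closed subsets of $B^m_X$ (its delta-measure argument does extend to the general case, but this is left implicit).
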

\begin{proof}
First we shall prove that $\omega(z,B_X^m,X)=1$ for all $z\in X$. To prove this assume by contradiction that there exists a $z_0\in X$ such that $\omega(z_0,B_X^m,X)<1$. Then there exists an open neighborhood $V$ of $B_X^m$, and $0<c<1$, such that for all $\mu\in \mathcal J_{z_0}^m$ it holds that
\[
\mu(V)<c\, .
\]
Let $W$ be an open set such that $B_X^m\Subset W\Subset V$, and let $f\in \mathcal C(X)$ be such that $f=-1$ on $W$, and $f=0$ on $X\setminus V$. Then we have that
\begin{equation}\label{2}
\textbf{S}_f\leq f=-1 \ \text {on} \ W\, ,
\end{equation}
and thanks to Edwards' theorem (Theorem~\ref{thm_edwards}) we have that
\[
\textbf{S}_f(z_0)=\inf\left \{\int \phi \, d\mu : \mu \in \mathcal{J}_{z_0}^m(X)\right\}\, .
\]
For given $\mu\in \mathcal J_{z_0}^m$ it holds that
\[
\int_Xf\,d\mu=\int_{X\setminus V}f\,d\mu+\int_Vf\,d\mu>0+(-1)(1-c)=-1+c\, ,
\]
so
\begin{equation}\label{3}
\textbf{S}_f(z_0)>-1+c\, .
\end{equation}
Therefore, by~(\ref{2}) and~(\ref{3}) we conclude that there exists a function $u\in \mathcal {SH}_m(X)$ such that $u<-1$ on $W$, and $u(z_0)>-1$. But this is impossible since by Theorem~\ref{thm_weakmaxprinciple} each $m$-subharmonic function must attain its maximum on $B_X^m$. This ends the first part of the proof.

Next, assume that there exists a proper closed subset $E$ of $B_X^m$ such that for all $z\in X$ we have that $\omega(z,E,X)=1$. Then there exist a point $z_0\in  O_X^m\setminus E$, and a neighborhood $V$ of $E$ such that $z_0\notin V$. Then since $\mathcal J_{z_0}^m=\{\delta_{z_0}\}$, we get that $\omega(z_0,E,X)=0$ and a contradiction is obtained.
\end{proof}

\begin{corollary}\label{cor_propBXm} Let $1\leq m\leq n$, and let $X$ be a compact set in $\C^n$. The \v{S}ilov boundary, $B^m_X$, is the smallest closed set such that for every $z\in X$ there exists a Jensen measure $\mu\in \mathcal{J}_z^m$ such that $\operatorname{supp} \mu\subseteq  B^m_X$.
\end{corollary}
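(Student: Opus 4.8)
The plan is to reduce the statement entirely to the characterisation of the \v{S}ilov boundary in Theorem~\ref{thm_propBXm}. The bridge I would establish is the following pointwise equivalence, valid for an arbitrary \emph{closed} set $E\subseteq X$ and a fixed $z\in X$: one has $\omega(z,E,X)=1$ if, and only if, there exists a Jensen measure $\mu\in\mathcal J_z^m$ with $\operatorname{supp}\mu\subseteq E$. Granting this, the corollary is immediate. By Theorem~\ref{thm_propBXm} we have $\omega(z,B^m_X,X)=1$ for every $z\in X$, so each $z$ carries a Jensen measure supported in $B^m_X$; conversely, if $E$ is any closed set such that every $z$ carries a Jensen measure supported in $E$, then $\omega(z,E,X)=1$ for all $z$, and the minimality part of Theorem~\ref{thm_propBXm} forces $B^m_X\subseteq E$. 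Thus $B^m_X$ is exactly the smallest closed set with the stated property.

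The implication $(\Leftarrow)$ is the easy half. If $\operatorname{supp}\mu\subseteq E$, then for every open $V\supseteq E$ we have $\mu(V)=1$, whence $\sup_{\mu'\in\mathcal J_z^m}\mu'(V)=1$ for every such $V$, and therefore $\omega(z,E,X)=1$ directly from Definition~\ref{def_harmonic measure}.

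The converse $(\Rightarrow)$ is where the work lies, and I expect it to be the main obstacle: one must pass from the hypothesis that there are Jensen measures concentrating almost all their mass on shrinking neighbourhoods of $E$ to the existence of a \emph{single} Jensen measure whose mass sits entirely on $E$. I would argue by weak-$*$ compactness. Set
\[
V_j=\{w\in X:\operatorname{dist}(w,E)<1/j\},
\]
an open neighbourhood of $E$ with $\bigcap_j V_j=E$ since $E$ is closed. As $\mu(V_j)\le 1$ always and $\omega(z,E,X)=1$, we get $\sup_{\mu\in\mathcal J_z^m}\mu(V_j)=1$, so we may choose $\mu_j\in\mathcal J_z^m$ with $\mu_j(V_j)>1-1/j$. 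Since $\mathcal J_z^m$ is convex and weak-$*$ closed in the probability measures on the compact set $X$ (the defining inequalities involve the weak-$*$ continuous functionals $\mu\mapsto\int u\,d\mu$ with $u\in\mathcal{SH}_m^o(X)$ continuous), it is weak-$*$ compact, and a subsequence of $(\mu_j)$ converges weak-$*$ to some $\mu\in\mathcal J_z^m$. To control the support of $\mu$ I would avoid a naive portmanteau estimate on closed sets, which gives the wrong inequality, and instead test against the continuous nonnegative function $\phi(w)=\operatorname{dist}(w,E)$, which vanishes exactly on $E$. Splitting the integral and using $\phi<1/j$ on $V_j$ together with the uniform bound $\phi\le\operatorname{diam}(X)$ gives
\[
\int_X\phi\,d\mu_j\le \tfrac1j\,\mu_j(V_j)+\operatorname{diam}(X)\,\mu_j(X\setminus V_j)\le \frac{1+\operatorname{diam}(X)}{j}\longrightarrow 0 .
\]
Passing to the weak-$*$ limit along the subsequence yields $\int_X\phi\,d\mu=0$, and since $\phi$ is continuous, nonnegative and $\{\phi=0\}=E$, this forces $\operatorname{supp}\mu\subseteq E$, completing the equivalence and hence the proof.
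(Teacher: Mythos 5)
Your proof is correct, but it takes a genuinely different route from the paper's, and the comparison is instructive. The paper treats the two halves of ``smallest'' asymmetrically. For minimality it does not use Theorem~\ref{thm_propBXm} at all: it observes that if every $z\in X$ carries a Jensen measure supported in a closed set $Y$, then for $z\in O^m_X$ the only available measure is $\delta_z$, so $O^m_X\subseteq Y$ and hence $B^m_X=\overline{O^m_X}\subseteq Y$ --- one line, no harmonic measure needed. For the existence half, the paper notes (as you do; this is the easy implication) that the support property implies $\omega(\cdot,Y,X)\equiv 1$, and then appeals to Theorem~\ref{thm_propBXm}, leaving entirely implicit the passage from $\omega(z,B^m_X,X)=1$ to an actual measure $\mu\in\mathcal{J}_z^m$ with $\operatorname{supp}\mu\subseteq B^m_X$. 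That passage is precisely the hard half of your bridge, and your weak-$*$ compactness argument is sound: $\mathcal{J}_z^m$ is weak-$*$ compact (and metrizable, since $\mathcal{C}(X)$ is separable, so extracting a subsequence is legitimate), the limit is again a Jensen measure with barycenter $z$, and testing against the continuous function $\operatorname{dist}(\cdot,E)$ correctly avoids the portmanteau trap and forces all mass onto the closed set $E$. So your proposal is, if anything, more complete than the paper's own proof: it supplies the construction the paper's citation of Theorem~\ref{thm_propBXm} presupposes (and which is what makes Corollary~\ref{cor_JzBm} legitimate), and as a by-product it upgrades Theorem~\ref{thm_propBXm} to an exact equivalence between $\omega(z,E,X)=1$ for all $z$ and the existence of Jensen measures supported in $E$. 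What the paper's route buys in exchange is brevity and independence: its Choquet-point argument for minimality avoids relying on the minimality clause of Theorem~\ref{thm_propBXm}, whereas your minimality step invokes that clause in the strong form (every closed $E\subseteq X$ with $\omega(\cdot,E,X)\equiv 1$ contains $B^m_X$); the paper's proof of that theorem literally only excludes proper closed subsets of $B^m_X$, but the same Choquet-point argument yields the strong form, so your use of it is justified.
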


\begin{proof} Assume that $Y$ is a subset of $X$ such that for every $z\in X$ there
exists a Jensen measure $\mu\in \mathcal{J}_z^m$ such that $\operatorname{supp} \mu\subseteq Y$. For $z$ from the Choquet boundary we have that  $\mathcal{J}_z^m=\{\delta_z\}$. Therefore it follows that
$O^m_X\subseteq Y$, and hence $B^m_X\subseteq Y$.   For $z\in X$, and for any neighborhood $V$ of $Y$, we have that
\[
\sup_{\mu\in\mathcal{J}_z^m} \mu(V)=1\, ,
\]
and therefore $\omega(z,Y,X)=1$. If $Y$ is the smallest closed set with the assumed property it now follows by using Theorem~\ref{thm_propBXm} that $B^m_X=Y$.
\end{proof}

In Definition~\ref{def_JzBm} we introduce the subset, $\mathcal{J}_z^{b,m}$, of Jensen measures
$\mathcal{J}_z^m$ whose support is contained in the \v{S}ilov boundary, $B^m_X$. We
shall need  $\mathcal{J}_z^{b,m}$ in Lemma~\ref{lem_poisson}, Proposition~\ref{prop_PBf}, and in Theorem~\ref{thm_poisson}.

\begin{definition}\label{def_JzBm} Let $1\leq m\leq n$, $X$ be a compact set in $\C^n$ and $z \in X$. Then we define
\[
 \mathcal{J}_z^{b,m}=\{\mu\in \mathcal{J}_z^m: \operatorname{supp} \mu\subseteq  B^m_X \}\, .
\]
\end{definition}

A direct consequence of Corollary~\ref{cor_propBXm} is that $\mathcal{J}_z^{b,m}$ is non-empty.

\begin{corollary}\label{cor_JzBm} Let $1\leq m\leq n$, and let $X$ be a compact set in $\C^n$. For every $z\in X$ we have that $\mathcal{J}_z^{b,m}$ is non-empty.
\end{corollary}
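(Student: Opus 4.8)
The plan is to recognize that this is an immediate unwinding of Corollary~\ref{cor_propBXm} against Definition~\ref{def_JzBm}, so the entire task is to match two statements rather than to carry out any estimate.

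First I would invoke Corollary~\ref{cor_propBXm}, which asserts that the \v{S}ilov boundary $B^m_X$ is the smallest closed set such that for every $z\in X$ there exists a Jensen measure $\mu\in\mathcal{J}_z^m$ with $\operatorname{supp}\mu\subseteq B^m_X$. The phrase ``smallest closed set with this property'' in particular forces $B^m_X$ itself to enjoy the property. Hence, fixing an arbitrary $z\in X$, the corollary furnishes a measure $\mu\in\mathcal{J}_z^m$ whose support is contained in $B^m_X$.

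Next I would compare this $\mu$ directly with Definition~\ref{def_JzBm}, which sets $\mathcal{J}_z^{b,m}=\{\mu\in\mathcal{J}_z^m:\operatorname{supp}\mu\subseteq B^m_X\}$. The measure produced in the previous step satisfies exactly the two defining conditions ($\mu\in\mathcal{J}_z^m$ and $\operatorname{supp}\mu\subseteq B^m_X$), so $\mu\in\mathcal{J}_z^{b,m}$ and therefore $\mathcal{J}_z^{b,m}\neq\varnothing$. Since $z$ was arbitrary, this holds for every $z\in X$, which is the claim.

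I do not expect any genuine obstacle: all the analytic work has already been done inside Theorem~\ref{thm_propBXm} and Corollary~\ref{cor_propBXm}, and what remains is purely formal. The only point meriting a moment's care is the logical observation that a ``smallest closed set having property $P$'' must itself have property $P$; once this is noted, the existence clause of Corollary~\ref{cor_propBXm} and the defining condition of $\mathcal{J}_z^{b,m}$ coincide verbatim, and the conclusion follows.
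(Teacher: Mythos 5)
Your proposal is correct and is exactly the paper's argument: the paper's proof of this corollary is the single line ``This follows from Corollary~\ref{cor_propBXm},'' and your write-up simply makes explicit the (valid) observation that the smallest closed set with the stated property must itself have that property, so the measure it provides lies in $\mathcal{J}_z^{b,m}$ by Definition~\ref{def_JzBm}.
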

\begin{proof} This follows from Corollary~\ref{cor_propBXm}.
\end{proof}

In solving the Dirichlet problem in the case when the Choquet boundary is the whole compact set
(Theorem~\ref{thm_whole}) we shall need $\mathcal I_X^m(z)$, defined below, together with Proposition~\ref{propositionIset}.
The inspiration behind $\mathcal I_X^m(z)$ is from potential theory, and it is explained in the remark after Proposition~\ref{propositionIset}.

\begin{definition}\label{definitionIset}
Let $1\leq m\leq n$, and let $X$ be a compact set in $\C^n$. For $z\in X$ let us define the following set
\[
\mathcal I_X^m(z)=\{w\in X: \omega(z,\bar{B}(w,r)\cap X,X)>0,  \ \text { for all } \ r>0\}.
\]
\end{definition}

\begin{proposition}\label{propositionIset}
Let $1\leq m\leq n$, and let $X$ be a compact set in $\C^n$. Then for $z\in X$ we have that
\begin{enumerate}\itemsep2mm
\item $\mathcal I_X^m(z)$ is a closed set;

\item if $\mu \in \mathcal J_z^m(X)$ then $\operatorname{supp}\mu\subseteq \mathcal I_X^m(z)$;

\item $\mathcal I_X^m(z)=\{z\}$ if, and only if, $z\in O_X^m$.
\end{enumerate}
\end{proposition}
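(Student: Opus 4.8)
The plan is to prove the three parts in order, since each builds naturally on the previous and on the characterizations of the Choquet and \v{S}ilov boundaries established earlier.

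\emph{Part (1): $\mathcal I_X^m(z)$ is closed.} I would argue that the complement is open. Suppose $w\notin \mathcal I_X^m(z)$. Then by definition there exists $r>0$ with $\omega(z,\bar B(w,r)\cap X,X)=0$. My claim is that every point $w'$ with $|w'-w|<r/2$ also lies in the complement, because $\bar B(w',r/2)\cap X\subseteq \bar B(w,r)\cap X$, and the $m$-harmonic measure is monotone in its set argument (immediate from Definition~\ref{def_harmonic measure}, since enlarging $E$ can only increase the infimum over covering open sets). Hence $\omega(z,\bar B(w',r/2)\cap X,X)\le \omega(z,\bar B(w,r)\cap X,X)=0$, so $w'$ is not in $\mathcal I_X^m(z)$ either. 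This shows the complement contains a ball around each of its points, so $\mathcal I_X^m(z)$ is closed.

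\emph{Part (2): $\operatorname{supp}\mu\subseteq \mathcal I_X^m(z)$ for $\mu\in\mathcal J_z^m(X)$.} The key observation is the elementary bound $\mu(\bar B(w,r)\cap X)\le \sup_{\nu\in\mathcal J_z^m}\nu(V)$ for any open $V\supseteq \bar B(w,r)\cap X$, so taking the infimum over such $V$ gives $\mu(\bar B(w,r)\cap X)\le \omega(z,\bar B(w,r)\cap X,X)$. Now suppose $w\notin\mathcal I_X^m(z)$; then there is an $r>0$ with $\omega(z,\bar B(w,r)\cap X,X)=0$, whence $\mu(\bar B(w,r)\cap X)=0$, so $w$ has an open neighborhood of zero $\mu$-measure and therefore $w\notin\operatorname{supp}\mu$. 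Contraposing yields $\operatorname{supp}\mu\subseteq\mathcal I_X^m(z)$.

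\emph{Part (3): $\mathcal I_X^m(z)=\{z\}$ iff $z\in O_X^m$.} For the forward direction, I would show $\mathcal I_X^m(z)=\{z\}$ forces $\mathcal J_z^m=\{\delta_z\}$: any $\mu\in\mathcal J_z^m$ has $\operatorname{supp}\mu\subseteq\mathcal I_X^m(z)=\{z\}$ by Part (2), hence $\mu=\delta_z$, so $z\in O_X^m$ by Definition~\ref{def_Jzm}. (One should first note $z\in\mathcal I_X^m(z)$ always, since $\delta_z\in\mathcal J_z^m$ gives $\omega(z,\bar B(z,r)\cap X,X)\ge\delta_z(\bar B(z,r)\cap X)=1>0$; this guarantees $\{z\}$ is the correct singleton.) For the converse, assume $z\in O_X^m$, so $\mathcal J_z^m=\{\delta_z\}$. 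Then for any $w\ne z$, choosing $r<|w-z|$ makes $z\notin\bar B(w,r)\cap X$; I would exhibit a small open $V\supseteq\bar B(w,r)\cap X$ still excluding $z$ and compute $\sup_{\mu\in\mathcal J_z^m}\mu(V)=\delta_z(V)=0$, so $\omega(z,\bar B(w,r)\cap X,X)=0$ and $w\notin\mathcal I_X^m(z)$. Thus $\mathcal I_X^m(z)=\{z\}$.

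The main obstacle I anticipate is the direction of the inequality relating $\mu(\bar B(w,r)\cap X)$ to $\omega$ in Part (2): one must be careful that $\bar B(w,r)\cap X$ is closed so that it is itself a legitimate set $E$ in Definition~\ref{def_harmonic measure}, and that the supremum over $\mu\in\mathcal J_z^m$ combined with the infimum over covering open sets correctly dominates the value of a single fixed $\mu$ on the closed set. The monotonicity used in Part (1) and the neighborhood-of-zero-measure argument in Part (2) are routine once this bound is correctly set up.
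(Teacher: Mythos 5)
Your proof is correct. In parts (2) and (3) you follow essentially the paper's own route: part (2) rests on the same inequality $\mu(\bar{B}(w,r)\cap X)\le\omega(z,\bar{B}(w,r)\cap X,X)$, obtained exactly as the paper obtains it (you merely phrase the conclusion contrapositively), and part (3) combines (2) with the vanishing of the harmonic measure of closed balls avoiding $z$ when $\mathcal J_z^m=\{\delta_z\}$, which is the paper's argument verbatim; you are in fact slightly more careful than the paper in verifying $z\in\mathcal I_X^m(z)$ via $\delta_z\in\mathcal J_z^m$, a point needed to upgrade $\mathcal I_X^m(z)\subseteq\{z\}$ to equality and left implicit in the paper. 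Part (1) is where you genuinely diverge, and your route is the better one. The paper decomposes $\mathcal I_X^m(z)=\bigcap_{r>0}Y_r$ with $Y_r=\{w\in X:\omega(z,\bar{B}(w,r)\cap X,X)>0\}$ and tries to prove each fixed-radius set $Y_r$ closed by a sequential compactness argument; that argument is delicate, since the chosen open sets $V_j\supset\bar{B}(x_j,r)\cap X$ need not contain $\bar{B}(x,r)\cap X$ when $x_j\to x$, and in any case positivity of $\mu_{j_0}(V_{j_0})$ for a single open superset does not make the infimum in the definition of $\omega$ positive. Your argument sidesteps all of this: you show the complement of $\mathcal I_X^m(z)$ is open using only the monotonicity of $\omega(z,\cdot,X)$ in its set argument together with the inclusion $\bar{B}(w',r/2)\subseteq\bar{B}(w,r)$ for $\|w'-w\|<r/2$, i.e.\ you exploit the quantifier ``for all $r>0$'' in Definition~\ref{definitionIset} rather than working at a fixed radius. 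What this buys is a proof of (1) that is more elementary, fully rigorous, and immune to the weak spot in the paper's own treatment, at the (irrelevant) cost of not establishing the stronger claim that each individual $Y_r$ is closed.
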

\begin{proof} (1) First note that
\[
\mathcal I_X^m(z)=\bigcap_{r>0}Y_r, \ \text { where } \ Y_r=\{w\in X: \omega(z,\bar{B}(w,r)\cap X,X)>0\}\, ,
\]
and therefore it is sufficient to prove that the sets $Y_r$ are closed. Let $Y_r\ni x_j\to x\in X$. Then for every $j$ there exists an open set $V_j\supset \bar {B}(x_j,r)\cap X$, and $\mu_j\in \mathcal J_z^m(X)$ such that $\mu_j(V_j)>0$. From a compactness argument there exists a $j_0$ such that $V_{j_0}\supset \bar{B}(x,r)\cap X$, and therefore $\omega(z,\bar{B}(x,r)\cap X,X)>0$, so $x\in Y_r$.

(2) Fix $r>0$. Let $\mu\in \mathcal J_z^m(X)$, and let $w\in \operatorname {supp}\mu$, then $\mu(\bar{B}(w,r)\cap X)>0$. This means that $\omega(z,\bar{B}(w,r)\cap X,X)>0$, hence $w\in \mathcal I_X^m(z)$.

(3) It follows from (2) that if $\mathcal I_X^m(z)=\{z\}$, then $\mathcal J_z^m(X)=\{\delta_z\}$. Thus, $z\in O_X^m$. On the other hand, if $z\in O_X^m$, then for $w\neq z$ we have that $\omega(z,\bar{B}(w,r),X)=0$ if $r<\|z-w\|$. Therefore, $w\notin \mathcal I_X^m(z)$, which implies that $\mathcal I_X^m(z)=\{z\}$.
\end{proof}

\begin{remark}
There is a very nice characterization of those points for which $\mathcal I_X^1(z)=\{z\}$ in the case of subharmonic functions. Namely, $\mathcal I_X^1(z)=\{z\}$ if, and only if, $X^c$ is not thin at $z$ (see Theorem 3.3 in~\cite{Poletsky2}). A similar result for $m$-subharmonic functions, $m>1$, is not possible. To see this look at Example~\ref{ex2_poisson}: Then for all $z\in \partial \bar{\mathbb D}^n$ the set $\big(\bar{\mathbb D}^n\big)^c$ is not $m$-thin at $z$, but $\mathcal I_{\bar{\mathbb{D}}^n}^m(z)\neq\{z\}$, $m>1$, if e.g. $z\in \mathbb{D}\times\dots\times \mathbb {D}\times\partial \mathbb{D}$.
\end{remark}

\section{The Dirichlet problem for continuous $m$-subharmonic functions}\label{sec_DPcont}

In Theorem~\ref{thm_oregular}, we characterize those compact sets $X$ for which the Dirichlet problem has a solution within the class of continuous $m$-subharmonic functions defined on a compact set. To obtain this we need the notion of $O^m$-regular compact sets (Definition~\ref{def_Omregular}). We end this section with Theorem~\ref{thm_whole} where we consider the case when $X$ is equal to its Choquet boundary.

\begin{definition}\label{def_Omregular}
Let $1\leq m\leq n$. We say that a compact set $X$ in $\C^n$ is \emph{$O^m$-regular} if $O_X^m$ is a closed subset
of $X$.
\end{definition}

The next theorem provides the characterization of $O^m$-regular sets.

\begin{theorem}\label{thm_oregular}
Let $1\leq m\leq n$. Let $X$ be a compact set in $\C^n$. Then the following conditions are equivalent:
\begin{enumerate}\itemsep2mm
\item $X$ is a $O^m$-regular set;

\item for every $f\in \mathcal{C}(O^m_X)$ there exists a function $u\in\mathcal{SH}_m(X)\cap\mathcal{C}(X)$ such that
$u=f$ on $O^m_X$;

\item for every $f\in \mathcal{C}(B^m_X)$ there exists a function $u\in\mathcal{SH}_m(X)\cap\mathcal{C}(X)$ such that
$u=f$ on $B^m_X$.

\end{enumerate}
\end{theorem}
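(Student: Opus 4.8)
The plan is to prove the cyclic chain $(1)\Rightarrow(3)\Rightarrow(2)\Rightarrow(1)$, since $(3)\Rightarrow(2)$ is essentially trivial (once we know $O^m_X$ is closed, we have $B^m_X = O^m_X$, so the two extension statements coincide; more carefully, any $f\in\mathcal C(O^m_X)$ extends continuously to $\mathcal C(B^m_X)$ by Tietze when $O^m_X$ is closed, and a solution on $B^m_X$ restricts to one on $O^m_X$). The real content lives in $(1)\Rightarrow(3)$, where I must actually build the extension, and in $(2)\Rightarrow(1)$, where I must deduce closedness of the Choquet boundary from the solvability of the extension problem.

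For $(1)\Rightarrow(3)$, assume $O^m_X$ is closed, so $B^m_X = O^m_X$. Given $f\in\mathcal C(B^m_X)$, first extend it (by Tietze) to some $\tilde f\in\mathcal C(X)$, and then form the upper envelope $\mathbf S_{\tilde f}$ from Definition~\ref{def}. By Theorem~\ref{thm_edwards}(b), $\mathbf S_{\tilde f}=\mathbf S^c_{\tilde f}=\inf\{\int \tilde f\,d\mu:\mu\in\mathcal J^m_z\}$, and the plan is to show this is the desired solution $u$. The key points are: (i) by Lemma~\ref{lem_choquet}, for every $z\in O^m_X$ we have $\mathbf S_{\tilde f}(z)=\tilde f(z)=f(z)$, so $u$ matches $f$ on the boundary; and (ii) $\mathbf S_{\tilde f}$ is itself continuous and $m$-subharmonic on $X$. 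Membership in $\mathcal{SH}_m(X)$ is standard for such envelopes; the main obstacle is establishing continuity. I expect to prove continuity by showing $\mathbf S_{\tilde f}$ is simultaneously upper semicontinuous (as an infimum over Jensen measures, using weak-$*$ compactness of $\mathcal J^m_z$) and lower semicontinuous (via Lemma~\ref{lemma_approx}, which gives an increasing sequence $u_j\in\mathcal{SH}^o_m(X)$ with $u_j\nearrow\mathbf S_{\tilde f}$, so $\mathbf S_{\tilde f}$ is l.s.c. as a sup of continuous functions, and the convergence can be made uniform by Dini once both semicontinuities and the compactness of $X$ are in hand).

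For $(2)\Rightarrow(1)$, I argue by contradiction: suppose $O^m_X$ is not closed, and pick a point $w\in\overline{O^m_X}\setminus O^m_X = B^m_X\setminus O^m_X$. Since $w\notin O^m_X$, by Theorem~\ref{peak} it is not a peak point, so the associated Jensen class $\mathcal J^m_w$ contains a measure other than $\delta_w$; equivalently there is some continuous $m$-subharmonic behaviour forcing $\mathbf S_g(w)<g(w)$ for a suitable $g$. The strategy is to choose $f\in\mathcal C(O^m_X)$ that cannot be continuously matched: take $f$ approaching a value at $w$ (through the boundary points accumulating to $w$) that is incompatible with the averaging inequality $u(w)\le\int u\,d\mu$ forced by a non-trivial $\mu\in\mathcal J^m_w$. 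Concretely, I would design $f$ so that its values on a sequence $O^m_X\ni z_j\to w$ tend to a limit $L$, while the Jensen inequality at $w$ together with the support of $\mu$ on nearby boundary points forces any solution $u$ to satisfy $u(w)\le L-\eta$ for some $\eta>0$, contradicting the continuity $u(w)=\lim u(z_j)=\lim f(z_j)=L$. The delicate part here is arranging the numerology of $f$ against the barycentric constraint, and this is the step I expect to be the main obstacle, since it requires exploiting the precise failure of the peak-point property at $w$ to manufacture an obstruction that no continuous $m$-subharmonic extension can overcome.
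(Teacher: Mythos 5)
Your cycle $(1)\Rightarrow(3)\Rightarrow(2)\Rightarrow(1)$ breaks at the very step you call trivial. To prove $(3)\Rightarrow(2)$ you assume $O_X^m$ is closed, i.e.\ condition (1); but in your chain (1) is only recovered at the end, via $(2)\Rightarrow(1)$, so the argument is circular. Nor can this be repaired cheaply: if $O_X^m$ is not closed, a function $f\in\mathcal C(O_X^m)$ need not extend continuously to $B_X^m=\overline{O_X^m}$ at all (Tietze applies to closed sets), so there is no datum on $B_X^m$ to which (3) could be applied --- proving $(3)\Rightarrow(2)$ essentially requires proving $(3)\Rightarrow(1)$ first. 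The implication that really is trivial is the reverse one, $(2)\Rightarrow(3)$: restrict $f\in\mathcal C(B_X^m)$ to $O_X^m$, solve by (2), and note that two continuous functions agreeing on $O_X^m$ agree on its closure. This is exactly why the paper runs the chain $(1)\Rightarrow(2)\Rightarrow(3)\Rightarrow(1)$. Your final step $(2)\Rightarrow(1)$ has the right idea (a nontrivial Jensen measure at $z_0\in B_X^m\setminus O_X^m$ should obstruct continuous solvability), but you leave the crucial construction as an admitted ``main obstacle''; the paper closes it in three lines using Lemma~\ref{lem_choquet}: pick $f\in\mathcal C(X)$ with $\textbf{S}_f(z_0)<f(z_0)$, let $u$ be a solution for this boundary datum, conclude $u=f$ on $B_X^m$ by density, and obtain the contradiction $\textbf{S}_f(z_0)\geq u(z_0)=f(z_0)$. (Even here some care is needed, since $u$ is a competitor for $\textbf{S}_f$ only if $u\leq f$ on all of $X$; a watertight variant uses the paper's own tools: solvability for $\pm g$ forces every $\nu\in\mathcal J_{z_0}^{b,m}$ to equal $\delta_{z_0}$, and then Lemma~\ref{lem_poisson} together with the test function $\|z-z_0\|^2\in\mathcal{SH}_m^o(X)$ gives $\mathcal J_{z_0}^m=\{\delta_{z_0}\}$, contradicting $z_0\notin O_X^m$.) In short, the ``numerology'' you worry about is already packaged in Lemma~\ref{lem_choquet}; what cannot be salvaged without reorganization is the order of your implications.

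The deeper gap is in $(1)\Rightarrow(3)$, the one implication the paper does not reprove but quotes from Poletsky--Sigurdsson (Theorem~3.3 of the cited Math.\ Z.\ paper). Your continuity argument for $\textbf{S}_{\tilde f}$ cannot work as described, because weak-$*$ compactness of Jensen measures yields the wrong semicontinuity. What compactness gives is: if $z_j\to z_0$ and $\mu_j\in\mathcal J_{z_j}^m$ converge weak-$*$ to $\mu$, then $\mu\in\mathcal J_{z_0}^m$ (test against the continuous functions in $\mathcal{SH}_m^o(X)$); this makes $z\mapsto\inf\left\{\int\tilde f\,d\mu:\mu\in\mathcal J_z^m\right\}$ \emph{lower} semicontinuous, which you already have from Lemma~\ref{lemma_approx}. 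Upper semicontinuity would require the opposite: every $\mu\in\mathcal J_{z_0}^m$ must be a weak-$*$ limit of measures in $\mathcal J_{z_j}^m$, and that is false in general. Indeed, u.s.c.\ of the envelope demonstrably fails whenever $O_X^m$ is not closed: take $O_X^m\ni z_j\to z_0\notin O_X^m$ and, via Lemma~\ref{lem_choquet}, a function $\tilde f$ with $\textbf{S}_{\tilde f}(z_0)<\tilde f(z_0)$; then $\textbf{S}_{\tilde f}(z_j)=\tilde f(z_j)\to\tilde f(z_0)>\textbf{S}_{\tilde f}(z_0)$. So any correct proof of continuity must use the closedness of $O_X^m$ in an essential way, and nothing in your sketch does. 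This step is the actual mathematical content of the theorem and is a substantial argument, not a Dini-type formality (Dini is moot anyway: once both semicontinuities hold, the envelope is continuous and nothing further is needed).
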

\begin{proof}
The implication (1)$\Rightarrow$(2) follows from Theorem 3.3 in~\cite{PoletskySigurdsson2}. To prove the implication (2)$\Rightarrow$(3) note that if $f\in \mathcal C(B_X^m)$, then $f\in \mathcal C(O_X^m)$, and therefore there exists $u\in \mathcal {SH}_m(X)\cap \mathcal C(X)$ such that $u=f$ on $O_X^m$. Since both functions $u$ and $f$ are continuous we obtain that $u=f$ on $B_X^m$. To prove the last implication  (3)$\Rightarrow$(1) suppose that there exists $z_0\in B_X^m\setminus O_X^m$. By Lemma~\ref{lem_choquet} there exists a function $f\in \mathcal C(X)$ such that $\textbf {S}_f(z_0)<f(z_0)$. By assumption there exists a function $u\in\mathcal{SH}_m(X)\cap\mathcal{C}(X)$ such that $u=f$ on $B^m_X$. Then we get that
\[
\textbf {S}_f(z_0)\geq u(z_0)=f(z_0)\, ,
\]
and a contradiction is obtained.
\end{proof}

Next, we consider the case when $X$ is equal to its Choquet boundary.

\begin{theorem}\label{thm_whole}
Let $X$ be a compact set in $\C^n$, and $1\leq m\leq n$. The following conditions are then equivalent:
\begin{enumerate}\itemsep2mm
\item $O_X^m=X$;

\item $B_X^m=X$;

\item $\mathcal C(X)=\mathcal {SH}_m(X)$;

\item $\mathcal C(X)=\mathcal {H}_m(X)$;

\item $f(z)=\|z\|^2\in \mathcal {H}_m(X)$;

\item $g(z)=-\|z\|^2\in \mathcal {SH}_m(X)$;

\item $\mathcal I_X^m(z)=\{z\}$ for all $z\in X$.
\end{enumerate}
\end{theorem}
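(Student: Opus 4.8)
The plan is to prove the equivalences in a cycle, together with a few short side-implications, relying heavily on the characterizations already established. I would organize the argument as follows: first establish the easy trivial equivalence $(1)\Leftrightarrow(2)$, then the chain $(1)\Rightarrow(3)\Rightarrow(4)\Rightarrow(5)\Rightarrow(6)\Rightarrow(7)\Rightarrow(1)$, picking up $(4)\Rightarrow(3)$ and the reverse directions where they come for free.

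For the foundational equivalence, $(1)\Leftrightarrow(2)$ is immediate: since $B_X^m$ is by Definition~\ref{def_BXm} the closure of $O_X^m$, and $O_X^m\subseteq X$ with $X$ closed, we have $O_X^m=X$ precisely when $B_X^m=X$. For $(1)\Rightarrow(3)$, if every point of $X$ is a Choquet point then $\mathcal{J}_z^m=\{\delta_z\}$ for all $z$, so the defining inequality in Definition~\ref{def_msubkomp} is an equality that holds trivially for \emph{every} upper semicontinuous function; restricting to continuous functions gives $\mathcal{C}(X)\subseteq\mathcal{SH}_m(X)$, and the reverse inclusion is part of the hypothesis on the class. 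The passage $(3)\Rightarrow(4)$ is then clean: if every continuous function is $m$-subharmonic, then so is its negative, so every continuous function is $m$-harmonic, giving $\mathcal{C}(X)=\mathcal{H}_m(X)$; the converse $(4)\Rightarrow(3)$ holds since $\mathcal{H}_m(X)\subseteq\mathcal{SH}_m(X)$. Implications $(4)\Rightarrow(5)$ and $(3)\Rightarrow(6)$ are mere specializations, since $\|z\|^2$ and $-\|z\|^2$ are continuous.

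The substantive step is closing the cycle, and I expect $(6)\Rightarrow(7)$ to be the main obstacle. The idea is to exploit that $g(z)=-\|z\|^2$ being $m$-subharmonic on $X$ forces Jensen measures to concentrate. Fix $z\in X$ and suppose $w\in\mathcal{I}_X^m(z)$ with $w\neq z$; by Proposition~\ref{propositionIset}(2) some $\mu\in\mathcal{J}_z^m$ has $w\in\operatorname{supp}\mu$, so $\mu$ is not the Dirac mass at $z$. Applying the defining inequality for $g$ gives $-\|z\|^2\leq\int_X -\|\zeta\|^2\,d\mu$, i.e.\ $\int_X\|\zeta\|^2\,d\mu\leq\|z\|^2$. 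On the other hand, the real and imaginary parts of each coordinate $\zeta_k$ are pluriharmonic, hence $n$-harmonic and a fortiori $m$-harmonic, so $\int_X\zeta_k\,d\mu=z_k$ for every $k$; this pins the barycenter of $\mu$ to $z$. Combining $\int\|\zeta\|^2\,d\mu\leq\|z\|^2=\|\int\zeta\,d\mu\|^2$ with Jensen's inequality (which runs the other way) forces $\int\|\zeta\|^2\,d\mu=\|z\|^2$, and the equality case in Cauchy--Schwarz then forces $\mu$ to be supported on $\{z\}$, a contradiction. Hence $\mathcal{I}_X^m(z)=\{z\}$.

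Finally $(7)\Rightarrow(1)$ is exactly Proposition~\ref{propositionIset}(3), which states that $\mathcal{I}_X^m(z)=\{z\}$ if and only if $z\in O_X^m$; requiring this for all $z\in X$ is precisely $O_X^m=X$. This closes the cycle. The delicate point to get right in $(6)\Rightarrow(7)$ is the argument that the coordinate functions are $m$-harmonic on $X$, so that the first moments of $\mu$ are controlled and the quadratic estimate can be upgraded to the equality that forces degeneracy of the measure; I would state this coordinate fact cleanly at the outset, since it is what converts the single inequality coming from $-\|z\|^2$ into a rigidity statement about $\mu$.
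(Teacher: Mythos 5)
Your cycle $(1)\Rightarrow(3)\Rightarrow(4)\Rightarrow(5)\Rightarrow(6)\Rightarrow(7)\Rightarrow(1)$ is fine in outline, but the theorem is not proved, because of how you dispose of condition (2). Only $(1)\Rightarrow(2)$ is immediate. The converse $(2)\Rightarrow(1)$ asserts that if $O_X^m$ is \emph{dense} in $X$ (which is all that $B_X^m=\overline{O_X^m}=X$ says), then $O_X^m$ is \emph{all} of $X$; your justification, that $\overline{O_X^m}=X$ forces $O_X^m=X$ because $X$ is closed, is a non sequitur: a dense proper subset has full closure, and $O_X^m$ is a priori only a $G_\delta$-set, its closedness being exactly what is at stake in Definition~\ref{def_Omregular}. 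Moreover, this implication cannot be purely formal: for general function systems a dense Choquet boundary need not be everything (already for uniform algebras, e.g.\ $R(K)$ for a Swiss cheese $K$ with $R(K)\neq C(K)$, the peak points form a dense proper subset of $K$), so specific properties of $\mathcal{SH}_m$ must enter. This is precisely where the paper invests its heaviest machinery: it derives $(2)\Rightarrow(3)$ from Theorem~\ref{thm_oregular} (hence ultimately from the Poletsky--Sigurdsson extension theorem), since when $B_X^m=X$ solvability of the Dirichlet problem on $B_X^m$ means that every $f\in\mathcal{C}(X)$ coincides on all of $X$ with some $u\in\mathcal{SH}_m(X)\cap\mathcal{C}(X)$, and the cycle is then closed through (6), (5) and (1). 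As it stands, your argument establishes the mutual equivalence of (1), (3), (4), (5), (6), (7) plus the one-way implication $(1)\Rightarrow(2)$, and derives nothing from (2); the hardest implication of the theorem is exactly the one you declared trivial.

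Your step $(6)\Rightarrow(7)$ is essentially correct but needs two repairs. First, you miscite Proposition~\ref{propositionIset}: part (2) gives $\operatorname{supp}\mu\subseteq\mathcal{I}_X^m(z)$ for every $\mu\in\mathcal{J}_z^m$, not the converse claim that every $w\in\mathcal{I}_X^m(z)$ lies in the support of \emph{some} Jensen measure, which is what you use. (The converse is true, but needs an argument, e.g.\ forming a countable convex combination of Jensen measures witnessing $\omega(z,\bar{B}(w,1/j)\cap X,X)>0$ for $j=1,2,\dots$.) Second, the detour through supports is unnecessary: your own computation, namely that the coordinate functions are pluriharmonic and hence $m$-harmonic, so any $\mu\in\mathcal{J}_z^m$ has barycenter $z$, after which $\int\|\zeta\|^2\,d\mu\leq\|z\|^2$ from (6) together with $\|z\|^2=\|\int\zeta\,d\mu\|^2\leq\int\|\zeta\|^2\,d\mu$ forces $\int\|\zeta-z\|^2\,d\mu=0$, i.e.\ $\mu=\delta_z$, proves $(6)\Rightarrow(1)$ outright; then (7) follows from Proposition~\ref{propositionIset}(3), which you invoke anyway for $(7)\Rightarrow(1)$. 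This rigidity argument is, at bottom, the same as the paper's proof of $(5)\Rightarrow(1)$, where the identical algebra is packaged as the observation that $-\|z-z_0\|^2$ is $m$-harmonic and peaks at $z_0$. Two minor further slips: the link $(5)\Rightarrow(6)$ in your announced chain is never actually stated (you justify $(3)\Rightarrow(6)$ instead), though it is immediate from the definition of $\mathcal{H}_m(X)$; and in $(1)\Rightarrow(3)$, calling the inclusion $\mathcal{SH}_m(X)\subseteq\mathcal{C}(X)$ ``part of the hypothesis'' is not an argument---under (1) every upper semicontinuous function is $m$-subharmonic, so (3) can only be read as $\mathcal{C}(X)\subseteq\mathcal{SH}_m(X)$, a looseness the paper itself shares.
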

\begin{proof} The following implications are obvious: (1)$\Rightarrow$(2), (3)$\Rightarrow$(4), (4)$\Rightarrow$(5), and (3)$\Rightarrow$(6).  We have that
(2)$\Rightarrow$(3) follows from Theorem~\ref{thm_oregular}. For implication (5)$\Rightarrow$(1) take $z_0\in X$. Since $-\|z\|^2\in \mathcal H_m(X)$, then also $-\|z-z_0\|^2$ is $m$-harmonic and it is also a peak function for $z_0$. Thus, $z_0\in O_X^m$. To note implication (6)$\Rightarrow$(5): Since $\|z\|^2$ is $m$-subharmonic, and by by assumption $-\|z\|^2$ is also $m$-subharmonic we have that $\|z\|^2$ is $m$-harmonic. Finally, the equivalence between (1) and (7) follows from Proposition~\ref{propositionIset}.
\end{proof}

\section{The Dirichlet problem for $m$-harmonic functions}\label{sec_DPharmonic}

In this section we shall characterize those compact sets for which the Dirichlet problem
has a solution for $m$-harmonic functions (Theorem~\ref{thm_poisson}). First let us compare
$m$-harmonic functions defined on a compact set with $m$-harmonic functions defined on an open set.

It was proved in~\cite{ACH} that every $m$-harmonic function defined on an \emph{open} set is pluriharmonic. The situation is different for the function theory on compact sets. We give in Example~\ref{ex_ph} an example of a $2$-harmonic function defined on a compact set that is not pluriharmonic ($3$-harmonic). On the other hand, in Proposition~\ref{prop_Bn} we show that there are compact sets $X$ for which $\mathcal {H}_m(X)=\mathcal {PH}(X)$.

\begin{example}\label{ex_ph}
Let $X=\{(0,0,z_3)\in \mathbb C^3: |z_3|\leq 1\}$, and let $u$ be a function defined on $X$ by $u(z_1,z_2,z_3)=-|z_3|^2$. Then  $-u$ is plurisubharmonic, and also $2$-subharmonic. Furthermore, $u$ is the restriction of a $2$-subharmonic function defined in $\mathbb C^3$; namely
\[
u(z_1,z_2,z_3)=2\left (|z_1|^2+|z_2|^2-\frac 12|z_3|^2\right),\qquad (z_1,z_2,z_3)\in X.
\]
Finally, note that $u$ is not plurisubharmonic ($3$-subharmonic) on $X$. To prove this assume by contradiction that $u\in \mathcal {SH}_3(X)$. By assumption there exists a decreasing sequence $u_j\in \mathcal {SH}_3^o(X)$ such that $u_j\to u$, as $j\to \infty$. But then $u_j$ must be subharmonic on the set $Y=X\cap \{|z_3|<1\}$, and therefore $u$ must be also subharmonic on $Y$, and a contradiction is obtained.\hfill{$\Box$}
\end{example}

\begin{proposition}\label{prop_Bn}
Let $\Omega\subset\mathbb C^n$ be a bounded $B$-regular domain in the sense of Sibony~\cite{sibony}. Then we have that $\mathcal {H}_m(\bar{\Omega})=\mathcal {PH}(\bar{\Omega})$.
\end{proposition}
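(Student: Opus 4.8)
The plan is to prove the two inclusions $\mathcal{PH}(\bar\Omega)\subseteq\mathcal H_m(\bar\Omega)$ and $\mathcal H_m(\bar\Omega)\subseteq\mathcal{PH}(\bar\Omega)$ separately, the first being soft and the second carrying all the content. Throughout write $X=\bar\Omega$. Since every plurisubharmonic function is $m$-subharmonic, $\mathcal{SH}_n^o(X)\subseteq\mathcal{SH}_m^o(X)$, and a probability measure satisfying the Jensen inequality against the larger test class $\mathcal{SH}_m^o(X)$ automatically satisfies it against $\mathcal{SH}_n^o(X)$; hence $\mathcal J_z^m(X)\subseteq\mathcal J_z^n(X)$ for every $z$. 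If $h\in\mathcal{PH}(X)=\mathcal H_n(X)$, then $h(z)=\int h\,d\mu$ for all $\mu\in\mathcal J_z^n(X)$, a fortiori for all $\mu\in\mathcal J_z^m(X)$, so $h\in\mathcal H_m(X)$. This gives the first inclusion.

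For the geometry of the boundaries I would first record that $B$-regularity forces $O_X^m=\partial\Omega$ for \emph{every} $1\le m\le n$. Indeed, $B$-regularity of $\Omega$ means precisely that each $z\in\partial\Omega$ is a peak point for the continuous plurisubharmonic functions, that is, an $n$-subharmonic peak point, so by Theorem~\ref{peak} (with $m=n$) we have $\partial\Omega\subseteq O_X^n$, while the reverse $O_X^n\subseteq\partial X=\partial\Omega$ always holds. Thus $O_X^n=\partial\Omega$. Combining $\mathcal J_z^m(X)\subseteq\mathcal J_z^n(X)$ with $\delta_z\in\mathcal J_z^m(X)$, the equality $\mathcal J_z^n(X)=\{\delta_z\}$ propagates to $\mathcal J_z^m(X)=\{\delta_z\}$, whence $O_X^n\subseteq O_X^m\subseteq\partial\Omega$ and therefore $O_X^m=\partial\Omega$ for all $m$. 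In particular $X$ is $O^m$-regular and $B_X^m=\partial\Omega$.

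The substance is the reverse inclusion. Fix $h\in\mathcal H_m(X)$; since both $h$ and $-h$ are $m$-subharmonic, $h$ is continuous. I would first transfer the statement to the open set. For $z\in\Omega$, any measure $\mu\in\mathcal J_z^{m,c}(\Omega)$ (compactly supported in $\Omega$) tests nonnegatively against every $\psi\in\mathcal{SH}_m^o(X)$, because such $\psi$ restricts to a continuous $m$-subharmonic function on $\Omega$; using the identity $\mathcal J_z^m(X)=\bigcap_{X\subset U}\mathcal J_z^m(U)$ this shows $\mathcal J_z^{m,c}(\Omega)\subseteq\mathcal J_z^m(X)$. Consequently both $h|_\Omega$ and $-h|_\Omega$ satisfy the sub-mean value inequality against all such Jensen measures, so $h|_\Omega\in\mathcal H_m(\Omega)$ in the open-set sense. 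By the result of~\cite{ACH} that every $m$-harmonic function on an open set is pluriharmonic, $h$ is pluriharmonic on $\Omega$.

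It then remains to pass pluriharmonicity of $h$ on $\Omega$, together with continuity up to $\partial\Omega$, to membership in $\mathcal{PH}(X)=\mathcal H_n(X)$; since this class is symmetric in $\pm h$, the task reduces to the single claim that any $v\in\mathcal C(\bar\Omega)$ which is plurisubharmonic on $\Omega$ lies in $\mathcal{SH}_n(X)$. I would establish this by a boundary approximation using a continuous strictly plurisubharmonic exhaustion $\rho$ of the $B$-regular domain $\Omega$ (with $\rho<0$ on $\Omega$ and $\rho=0$ on $\partial\Omega$): the functions $v_\varepsilon=v+\varepsilon\rho$ are strictly plurisubharmonic on $\Omega$, continuous on $\bar\Omega$, agree with $v$ on $\partial\Omega$, and converge uniformly to $v$. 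Approximating each $v_\varepsilon$ by functions of $\mathcal{SH}_n^o(X)$ (via Richberg regularization together with $B$-regularity) places $v_\varepsilon$ in $\mathcal{SH}_n(X)$, and since $\mathcal{SH}_n(X)$ is closed under uniform limits, letting $\varepsilon\to0$ yields $v\in\mathcal{SH}_n(X)$; applying this to $v=\pm h$ gives $h\in\mathcal{PH}(X)$. I expect the decisive difficulty to be exactly this last step: the interior result of~\cite{ACH} is what breaks the symmetry between $m$ and $n$, but turning interior pluriharmonicity into compact-set pluriharmonicity requires controlling the Jensen measures whose support reaches $\partial\Omega$, and it is the approximation of continuous plurisubharmonic functions across the boundary of a $B$-regular domain that must be carried out with care.
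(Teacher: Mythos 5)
Your proposal follows the paper's skeleton for everything except the decisive step. The easy inclusion $\mathcal{PH}(\bar{\Omega})\subseteq\mathcal{H}_m(\bar{\Omega})$ via $\mathcal J_z^m(\bar{\Omega})\subseteq\mathcal J_z^n(\bar{\Omega})$ is correct (the paper leaves it implicit), and your interior step --- showing $h|_{\Omega}\in\mathcal H_m(\Omega)$ through $\mathcal J_z^{m,c}(\Omega)\subseteq\mathcal J_z^m(\bar{\Omega})$ and then invoking the theorem of~\cite{ACH} that $m$-harmonic functions on open sets are pluriharmonic --- is exactly the paper's route, carried out more carefully than in the paper itself. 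The divergence is at the boundary: the paper disposes of $\partial\Omega$ by recalling, as a known fact from Sibony's theory, that $B$-regularity forces $\mathcal J_z^n(\bar{\Omega})=\{\delta_z\}$ for every $z\in\partial\Omega$, so that the Jensen equality defining $n$-harmonicity is vacuous at boundary points; you instead set out to prove the stronger bridge lemma that every $v\in\mathcal C(\bar{\Omega})\cap\mathcal{PSH}(\Omega)$ lies in $\mathcal{SH}_n(\bar{\Omega})$, by adding $\varepsilon\rho$ and invoking ``Richberg regularization together with $B$-regularity.''

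That step is a genuine gap, and it is the whole difficulty. Richberg regularization is a purely interior device: it approximates a continuous strictly plurisubharmonic function by smooth ones \emph{on the open set} $\Omega$, and produces nothing defined on a neighborhood of $\bar{\Omega}$. Membership in $\mathcal{SH}_n(\bar{\Omega})$, however, is a condition against the measures $\mathcal J_z^n(\bar{\Omega})$, which are generated by $\mathcal{SH}_n^o(\bar{\Omega})$, i.e.\ by functions plurisubharmonic \emph{across} $\partial\Omega$; to place $v_\varepsilon$ in $\mathcal{SH}_n(\bar{\Omega})$ you would need uniform approximation on $\bar{\Omega}$ by such functions, which is precisely the Forn\ae ss--Wiegerinck-type boundary approximation problem you are trying to bypass. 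Moreover, the bridge lemma is not a formal consequence of $B$-regularity in the barrier sense: for $n=1$ the slit disc $\Omega=\mathbb{D}\setminus[0,1)$ is $B$-regular (every boundary point admits a barrier), $\bar{\Omega}=\overline{\mathbb{D}}$, and $v=-\operatorname{Im}\sqrt{z}$ (branch cut along $[0,\infty)$) is continuous on $\bar{\Omega}$ and harmonic on $\Omega$, yet for $z_0\in(0,1)$ the uniform measure $\mu$ on a small circle about $z_0$ belongs to $\mathcal J_{z_0}^1(\bar{\Omega})$ while $\int v\,d\mu<0=v(z_0)$; hence $v\notin\mathcal{SH}_1(\bar{\Omega})$. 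The same defect undermines your earlier assertion that $B$-regularity makes each $z\in\partial\Omega$ an $n$-subharmonic peak point: Sibony's definition supplies peak functions in $\mathcal{PSH}(\Omega)\cap\mathcal C(\bar{\Omega})$, whereas Definition~\ref{def_peakpoint} demands them in $\mathcal{SH}_n(\bar{\Omega})$, and identifying those two classes is again the unproven bridge. Any correct completion must obtain the triviality of $\mathcal J_z^n(\bar{\Omega})$ at boundary points from the compact-set side of Sibony's theory (as the paper does by citation) rather than from interior regularization --- and the slit-disc example shows this boundary fact is itself delicate, requiring more than the existence of barriers.
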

\begin{proof}
Recall that if $\Omega$ is a $B$-regular domain, then for all $z\in \partial \Omega$ we have that $\mathcal J_z^n(\bar{\Omega})=\{\delta_z\}$. Take any $h\in \mathcal {H}_m(\bar{\Omega})$, then $h\in \mathcal {H}_m(\Omega)$, so $h\in \mathcal {PH}(\Omega)$. By the assumption of $B_n$-regularity we have also that $h\in \mathcal {PH}(\partial \Omega)$, which implies that $h\in \mathcal {PH}(\bar{\Omega})$.
\end{proof}

One of the main notions in Theorem~\ref{thm_poisson} is so called $m$-Poisson sets defined as follows.

\begin{definition}\label{def_poissonset}
Let $1\leq m\leq n$. A compact set $X$ in $\C^n$ is called a \emph{$m$-Poisson set} if for every $f\in \mathcal{C}(B^m_X)$, there exists a function $u\in\mathcal{H}_m(X)$ such that $u=f$ on $B^m_X$.
\end{definition}

In Example~\ref{ex1_poisson} we see that the topological closure of the unit ball in $\C^n$ is
a $O^m$-regular set, but not a $m$-Poisson set. Furthermore, in Example~\ref{ex2_poisson} we see that the topological closure of the unit polydisc in $\C^n$, $n\geq 3$, is a $n$-Poisson set.

\begin{example}\label{ex1_poisson}
Let $X$ be the topological closure of the unit ball $\mathbb{B}$ in $\C^n$, and let $1\leq m\leq n$. Then we have that
\[
O^m_X=B^m_X=\partial \mathbb{B}\, .
\]
Hence, $X$ is a $O^m$-regular set. But $X$ is not a $m$-Poisson set, since it is not always possible to extend a function $f\in \mathcal{C}(\partial \mathbb{B})$ to the inside so that it is $m$-pluriharmonic (see e.g.~\cite{Bedford}).\hfill{$\Box$}
\end{example}

\begin{example}\label{ex2_poisson}
Let $X$ be the closure of the unit polydisc $\mathbb{D}^n$ in $\C^n$, and let $1\leq m\leq n$. Then
\begin{align*}
O^1_X=B^1_X & =\partial {\bar{\mathbb{D}}}^n \, , \\
O^n_X=B^n_X & =\partial \mathbb{D}\times\dots\times \partial \mathbb{D}\, ,
\end{align*}
and for $1<m<n$ we get that
\[
O^m_X=B^m_X=\bigcup_{1\leq j_1<\dots<j_m\leq n}\partial \mathbb{D}\times\dots\times\overbrace{\bar{\mathbb {D}}}^{j_1}\times \dots \times\overbrace{\bar{\mathbb {D}}}^{j_m}\times \dots\times \partial \mathbb{D}.
\]
Thus, $O^n_X$, and  $B^n_X$ are equal to the distinguished boundary of $\mathbb{D}^n$. For $1<m<n$, the above statement follows from the fact that any $m$-subharmonic function in $\Omega\subset \mathbb C^n$ is $(m-1)$-subharmonic on any hyperplane passing by $\Omega$ (see~\cite{AS}). In particular, $X$ is a $O^m$-regular set. Furthermore, for $n\geq 3$ the compact set $X$ is a $n$-Poisson set, since for every $f\in \mathcal{C}(\partial \mathbb{D}\times\dots\times \partial \mathbb{D})$ we can
always find a pluriharmonic function $u$ defined on $\mathbb{D}^n$ such that $u=f$ on $\partial \mathbb{D}\times\dots\times \partial \mathbb{D}$ (see e.g.~\cite{ACpluriharmonic, ACH}).\hfill{$\Box$}
\end{example}

Let us next define an (partial) order in the cone of Jensen measures.

\begin{definition}
Let $\mu$ and $\nu$ be Jensen measures. We say that $\mu$ is \emph{subordinated} to $\nu$, and denote it with $\mu\preccurlyeq \nu$, if for all $u\in \mathcal {SH}_m(X)\cap \mathcal C(X)$ it holds that
\[
\int_Xu\,d\mu\leq \int_Xu\,d\nu\, .
\]
\end{definition}
\begin{remark}
Note that $\preccurlyeq$ is indeed an (partial) order, see e.g.~\cite{Gamelin}.
\end{remark}

Lemma~\ref{lem_poisson} shall later be used in Proposition~\ref{prop_PBf} and in Theorem~\ref{thm_poisson}. The set $\mathcal{J}_z^{b,m}$ of Jensen measures used in Lemma~\ref{lem_poisson} was defined in Definition~\ref{def_JzBm}.

\begin{lemma}\label{lem_poisson}
Let $1\leq m\leq n$, and let $X$ be a compact set in $\C^n$. For every $\mu\in \mathcal{J}_z^m$ there exists a measure $\nu\in \mathcal{J}_z^{b,m}$ with $\mu\preccurlyeq \nu$.
\end{lemma}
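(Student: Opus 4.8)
The plan is to use a Zorn's lemma argument on the partially ordered set of Jensen measures subordinated above a fixed $\mu$, together with the fact that subordination is controlled by integrals against continuous $m$-subharmonic functions. Concretely, I would fix $\mu \in \mathcal{J}_z^m$ and consider the set
\[
P = \{\nu \in \mathcal{J}_z^m : \mu \preccurlyeq \nu\}\, ,
\]
equipped with the partial order $\preccurlyeq$. This set is nonempty since $\mu \in P$. The strategy is to produce a $\preccurlyeq$-maximal element $\nu$ of $P$ and then show that maximality forces $\operatorname{supp}\nu \subseteq B_X^m$, so that $\nu \in \mathcal{J}_z^{b,m}$ and $\mu \preccurlyeq \nu$ as required.

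First I would verify the hypotheses of Zorn's lemma. The key point is compactness: the space of probability measures on the compact set $X$ is weak-$*$ compact, and $\mathcal{J}_z^m$ is weak-$*$ closed (each defining inequality $u(z) \le \int u\,d\nu$ is a closed condition, using the generation of $\mathcal{J}_z^m$ by continuous $m$-subharmonic functions noted in the remark after Definition~\ref{def_msubkomp}). Given a $\preccurlyeq$-chain $\{\nu_\alpha\}$ in $P$, I would extract a weak-$*$ cluster point $\nu$; then for each fixed $u \in \mathcal{SH}_m(X)\cap\mathcal{C}(X)$ the net $\int u\,d\nu_\alpha$ is monotone and bounded (by $\sup_X u$), hence convergent, and passing to the limit shows $\int u\,d\nu_\alpha \le \int u\,d\nu$ for every $\alpha$, i.e. $\nu$ is an upper bound for the chain. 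Here the restriction to \emph{continuous} test functions is essential so that weak-$*$ convergence can be applied termwise. Zorn's lemma then yields a maximal element $\nu$.

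The heart of the argument, and the step I expect to be the main obstacle, is showing that a maximal $\nu$ must be supported in the \v{S}ilov boundary $B_X^m$. The idea is a balayage/sweeping-out construction: if $\operatorname{supp}\nu \not\subseteq B_X^m$, then there is a point $w_0 \in \operatorname{supp}\nu \setminus B_X^m$, and since $w_0 \notin O_X^m$ (as $B_X^m$ is the closure of $O_X^m$) there exists a Jensen measure $\lambda \in \mathcal{J}_{w_0}^m$ with $\lambda \neq \delta_{w_0}$ that strictly dominates $\delta_{w_0}$ on some continuous $m$-subharmonic function. I would then form a new measure by replacing the part of $\nu$ near $w_0$ with its sweep-out, using a disintegration: roughly, choose for $\nu$-almost every point $x$ a measure $\lambda_x \in \mathcal{J}_x^m$ with $\operatorname{supp}\lambda_x \subseteq B_X^m$ (possible by Corollary~\ref{cor_JzBm}) depending measurably on $x$, and set $\nu' = \int \lambda_x\,d\nu(x)$. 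One checks that $\nu' \in \mathcal{J}_z^m$ (averaging Jensen measures preserves the barycenter inequality by Fubini) and that $\nu \preccurlyeq \nu'$; strictness of the domination at $w_0$ contradicts maximality of $\nu$ unless $\nu$ was already supported on $B_X^m$. The delicate points are the measurable selection $x \mapsto \lambda_x$ and the verification that the averaged measure genuinely strictly dominates $\nu$, so that maximality is actually violated; this is where I would need to argue carefully rather than wave hands, possibly invoking Corollary~\ref{cor_propBXm} to guarantee the boundary-supported measures exist uniformly.
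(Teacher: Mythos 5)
Your Zorn's lemma step is sound: $\mathcal{J}_z^m$ is weak-$*$ closed (it is cut out by the conditions $u(z)\le\int u\,d\nu$ with $u\in\mathcal{SH}_m^o(X)\subseteq\mathcal{C}(X)$), chains have upper bounds by monotonicity plus weak-$*$ compactness, and this is indeed the skeleton of the argument behind Theorem~1.17 of Gamelin's book, which is all the paper itself invokes for this lemma. The genuine gap is in the step you yourself flag as the heart: showing that a maximal $\nu$ is supported in $B_X^m$. Your measure $\nu'=\int\lambda_x\,d\nu(x)$ presupposes a $\nu$-measurable selection $x\mapsto\lambda_x\in\mathcal{J}_x^{b,m}$; Corollary~\ref{cor_JzBm} gives pointwise existence only, and the measurability of $x\mapsto\int u\,d\lambda_x$ (needed for $\nu'$ to even be defined) is precisely the hard part --- it is nowhere addressed. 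The confusion is also structural: if such a measurable selection were available, Zorn's lemma would be superfluous, since you could sweep $\mu$ itself by setting $\nu=\int\lambda_x\,d\mu(x)$ and verify directly that $\nu\in\mathcal{J}_z^{b,m}$ and $\mu\preccurlyeq\nu$ (Fubini gives $\int u\,d\nu=\int\bigl(\int u\,d\lambda_x\bigr)d\mu(x)\ge\int u\,d\mu\ge u(z)$). So the proposal runs two different proofs in parallel and completes neither.

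The standard way to close the gap --- and the route of the proof the paper cites --- is to perform the sweep at the level of functionals, where no selection is needed. For maximal $\nu$ and $g\in\mathcal{C}(X)$ set $q(g)=\int(-\textbf{S}_{-g})\,d\nu$; here $-\textbf{S}_{-g}\ge g$ is upper semicontinuous, equal by Edwards' theorem (Theorem~\ref{thm_edwards}) to $\sup\{\int g\,d\mu:\mu\in\mathcal{J}_x^m\}$ at each $x$, and $q$ is sublinear because $\textbf{S}$ is superadditive. Given $f\in\mathcal{C}(X)$, apply Hahn--Banach to the functional $tf\mapsto tq(f)$ dominated by $q$: the extension is a probability measure $\rho$ with $\rho\le q$ and $\rho(f)=q(f)$. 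Testing $\rho\le q$ on $g=-u$ with $u\in\mathcal{SH}_m(X)\cap\mathcal{C}(X)$ (where $\textbf{S}_u=u$) gives $\nu\preccurlyeq\rho$, and then $\rho\in\mathcal{J}_z^m$. Maximality of $\nu$ together with antisymmetry of $\preccurlyeq$ (the paper's remark; concretely, differences of continuous $m$-subharmonic functions are dense in $\mathcal{C}(X)$, since any smooth $f$ equals $(f+A\|z\|^2)-A\|z\|^2$ for $A$ large) forces $\rho=\nu$, hence $\int(-\textbf{S}_{-f}-f)\,d\nu=0$, hence $-\textbf{S}_{-f}=f$ holds $\nu$-a.e. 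Running this over a countable dense family $\{f_j\}\subseteq\mathcal{C}(X)$ and using the Lipschitz bound $|\textbf{S}_g-\textbf{S}_h|\le\|g-h\|_\infty$ together with Lemma~\ref{lem_choquet}, one gets $\nu(O_X^m)=1$ (recall $O_X^m$ is a $G_\delta$, hence Borel), and therefore $\operatorname{supp}\nu\subseteq\overline{O_X^m}=B_X^m$, i.e. $\nu\in\mathcal{J}_z^{b,m}$. This functional-analytic sweep is the missing ingredient that your maximality argument needs.
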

\begin{proof} Cf. Theorem~1.17 in~\cite{Gamelin}.
\end{proof}

In contrast with Definition~\ref{def} we shall in Definition~\ref{def_PBf} introduce an envelope
construction where the given function is only defined on the \v{S}ilov boundary, $B^m_X$. We
name this envelope as customary after Oskar Perron and Hans-Joachim Bremermann. In Proposition~\ref{prop_PBf},
we prove some elementary, but useful facts about this envelope.

\begin{definition}\label{def_PBf}
Let $1\leq m\leq n$, and $X$ be a compact set in $\C^n$. For given $f\in \mathcal{C}(B^m_X)$ we define the \emph{Perron-Bremermann envelope}, $PB_f$, as
\[
PB_f=\sup\left\{v(z): v\in \mathcal {SH}_m(X)\cap\mathcal{C}(X), v\leq f\, \text { on }\, B^m_X\right\}\, .
\]
\end{definition}

\begin{proposition}\label{prop_PBf}
Let $1\leq m\leq n$, and $X$ be a compact set in $\C^n$. For every $f\in \mathcal{C}(B^m_X)$ we have that
\begin{enumerate}\itemsep2mm
\item $PB_f$ is a lower semicontinuous function, and that for any $z\in X$ and for any $\mu \in \mathcal J_z^m$ it holds that
\[
PB_f(z)\leq \int PB_f\,d\mu \ ;
\]

\item if $X$ is a $O^m$-regular set, then
\[
PB_f(z)=\inf \left \{\int f\,d\mu; \mu\in \mathcal J_z^{b,m} \right\}\, .
\]
\end{enumerate}
\end{proposition}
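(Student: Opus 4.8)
The plan is to establish the two parts separately, leaning heavily on the duality and subordination machinery assembled earlier in the excerpt. For part (1), I would first argue lower semicontinuity. The natural route is to show that $PB_f$ is itself an increasing limit (or supremum) of continuous $m$-subharmonic functions. Observe that $f\in\mathcal{C}(B^m_X)$ can be extended, by Tietze, to some $\tilde f\in\mathcal{C}(X)$; the competitors $v$ in the definition of $PB_f$ are exactly those $v\in\mathcal{SH}_m(X)\cap\mathcal{C}(X)$ with $v\le\tilde f$ on $B^m_X$. Since $PB_f$ is a supremum of a family of continuous (hence lower semicontinuous) functions, and an arbitrary supremum of lower semicontinuous functions is lower semicontinuous, the lower semicontinuity of $PB_f$ is immediate. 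For the sub-mean-value inequality against every $\mu\in\mathcal{J}_z^m$, I would take any admissible competitor $v$ in the envelope. Then $v\in\mathcal{SH}_m(X)$, so by Definition~\ref{def_msubkomp} we have $v(z)\le\int v\,d\mu\le\int PB_f\,d\mu$, using $v\le PB_f$ pointwise. Taking the supremum over all such $v$ on the left yields $PB_f(z)\le\int PB_f\,d\mu$. One must make sure $PB_f$ is $\mu$-integrable, but this is fine since $PB_f$ is bounded (trapped between $-\|\tilde f\|_\infty$ and the weak maximum principle bound) and Borel by lower semicontinuity.

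For part (2), assume $X$ is $O^m$-regular. I would prove the two inequalities separately. The inequality $PB_f(z)\le\inf\{\int f\,d\mu:\mu\in\mathcal{J}_z^{b,m}\}$ is the easy direction: for any $\mu\in\mathcal{J}_z^{b,m}$, the support of $\mu$ lies in $B^m_X$, so any competitor $v$ satisfies $v(z)\le\int v\,d\mu=\int_{B^m_X}v\,d\mu\le\int_{B^m_X}f\,d\mu=\int f\,d\mu$, because $v\le f$ on $B^m_X$; taking the supremum over $v$ and then the infimum over $\mu$ gives the bound. The reverse inequality is the substantive part and is where $O^m$-regularity enters. By Theorem~\ref{thm_oregular}(3), regularity guarantees that $f\in\mathcal{C}(B^m_X)$ extends to some $g\in\mathcal{SH}_m(X)\cap\mathcal{C}(X)$ with $g=f$ on $B^m_X$; such a $g$ is an admissible competitor, so $PB_f(z)\ge g(z)$. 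The goal is then to show $g(z)$ can be pushed up to $\inf\{\int f\,d\mu:\mu\in\mathcal{J}_z^{b,m}\}$, which I would approach via Edwards duality (Theorem~\ref{thm_edwards}) combined with the subordination Lemma~\ref{lem_poisson}.

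The cleanest strategy for the reverse inequality is to relate the $B^m_X$-envelope to the ordinary envelope $\textbf{S}_g$ and then invoke Theorem~\ref{thm_edwards}(b). Since $g\in\mathcal{SH}_m(X)\cap\mathcal{C}(X)$, we have $\textbf{S}_g=g$, and Theorem~\ref{thm_edwards}(b) gives $g(z)=\textbf{S}_g(z)=\inf\{\int g\,d\mu:\mu\in\mathcal{J}_z^m\}$. Now I would use Lemma~\ref{lem_poisson}: for any $\mu\in\mathcal{J}_z^m$ there is $\nu\in\mathcal{J}_z^{b,m}$ with $\mu\preccurlyeq\nu$, so $\int g\,d\mu\le\int g\,d\nu=\int f\,d\nu$ (the last equality because $\nu$ is supported on $B^m_X$ where $g=f$). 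Hence the infimum over $\mathcal{J}_z^m$ equals the infimum over the subordinated measures in $\mathcal{J}_z^{b,m}$, giving $g(z)=\inf\{\int f\,d\nu:\nu\in\mathcal{J}_z^{b,m}\}$, and combined with $PB_f(z)\ge g(z)$ this closes the circle with the easy inequality to yield equality. The main obstacle I anticipate is verifying that the subordination step genuinely collapses the infimum over $\mathcal{J}_z^m$ onto $\mathcal{J}_z^{b,m}$ without losing anything — one must check that $g=f$ on $B^m_X$ makes $\int g\,d\nu$ depend only on the boundary values of $f$, and confirm that the competitor $g$ supplied by $O^m$-regularity is precisely what lets the envelope reach the dual infimum rather than merely bounding it.
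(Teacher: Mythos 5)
Part (1) of your proposal is correct (it is the same observation the paper dismisses as immediate), and your ``easy'' inequality $PB_f(z)\le \inf\{\int f\,d\mu:\mu\in\mathcal{J}_z^{b,m}\}$ in part (2) is exactly the paper's argument. The gap is in your reverse inequality: the subordination step points the wrong way. Lemma~\ref{lem_poisson} produces, for each $\mu\in\mathcal{J}_z^m$, a measure $\nu\in\mathcal{J}_z^{b,m}$ with $\int v\,d\mu\le\int v\,d\nu$ for every \emph{$m$-subharmonic} $v$; applied to your subharmonic extension $g$ this only gives $\int g\,d\mu\le\int g\,d\nu$, i.e.\ that the infimum over $\mathcal{J}_z^m$ is $\le$ the infimum over $\mathcal{J}_z^{b,m}$ --- which you already know from the inclusion $\mathcal{J}_z^{b,m}\subseteq\mathcal{J}_z^m$. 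To ``collapse'' the two infima you would need, for each $\mu$, a boundary measure $\nu$ with $\int g\,d\nu\le\int g\,d\mu$, and subordination delivers the opposite. In fact your claimed identity $g(z)=\inf\{\int f\,d\nu:\nu\in\mathcal{J}_z^{b,m}\}$ is false in general: the left-hand side equals $\inf\{\int g\,d\mu:\mu\in\mathcal{J}_z^m\}$ only because that infimum is attained at $\delta_z$ (Edwards is doing no work there), and it can be strictly smaller than the right-hand side. Concretely, take $X=\bar{\mathbb{B}}$, so $B_X^m=\partial\mathbb{B}$ by Example~\ref{ex1_poisson}, let $f\equiv 0$ and $g(z)=\|z\|^2-1$; then $g\in\mathcal{SH}_m(X)\cap\mathcal{C}(X)$, $g=f$ on $B_X^m$, the infimum over $\mathcal{J}_z^{b,m}$ is $0$, yet $g<0$ in the interior. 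A single $m$-subharmonic competitor can only give the lower bound $PB_f(z)\ge g(z)$, which is in general strict.

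The paper's proof fixes precisely this point by extending $f$ from the \emph{other side}: apply Theorem~\ref{thm_oregular} to $-f$ to get $u$ with $-u\in\mathcal{SH}_m(X)\cap\mathcal{C}(X)$ and $u=f$ on $B^m_X$. Every competitor $v$ for $\textbf{S}_u$ (i.e.\ $v\in\mathcal{SH}_m(X)$ with $v\le u$ on all of $X$) satisfies $v\le f$ on $B^m_X$, hence is a competitor for $PB_f$, so $PB_f(z)\ge \textbf{S}_u(z)$. Edwards' theorem (Theorem~\ref{thm_edwards}, applied to the continuous function $u$, where it is genuinely needed since $u$ is not $m$-subharmonic) gives
\[
\textbf{S}_u(z)=\inf\left\{\int u\,d\mu:\mu\in\mathcal{J}_z^m\right\}\, ,
\]
and now Lemma~\ref{lem_poisson} is applied to $-u$, which \emph{is} $m$-subharmonic: for $\mu\in\mathcal{J}_z^m$ and a subordinated $\nu\in\mathcal{J}_z^{b,m}$ one gets $\int u\,d\mu\ge\int u\,d\nu=\int f\,d\nu$, whence $\textbf{S}_u(z)\ge\inf\{\int f\,d\nu:\nu\in\mathcal{J}_z^{b,m}\}$, and the chain closes against your easy inequality. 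So the missing idea is that $O^m$-regularity must be used to produce a continuous $m$-\emph{superharmonic} extension of $f$, not an $m$-subharmonic one; only then does subordination point in the useful direction.
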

\begin{proof} Part (1) is an immediate consequence of the definition. Next, to part (2). For $f\in \mathcal C(B_X^m)$,  let
\[
I_f(z)=\inf \left \{\int f\,d\mu; \mu\in \mathcal J_z^{b,m} \right\}\, .
\]
By construction we have that if $v\in \mathcal {SH}_m(X)\cap \mathcal C(X)$, and $v\leq f$ on $B_X^m$, then for any $\mu\in \mathcal J_z^{b,m}$ it holds that
\[
v(z)\leq \int v\,d\mu\leq \int f\,d\mu\, ,
\]
and therefore $PB_f(z)\leq I_f(z)$. By Theorem~\ref{thm_oregular}, there exists a function $u$ such that $-u\in \mathcal {SH}_m(X)\cap \mathcal C(X)$ and $u=f$ on $B_X^m$. Thanks to Lemma~\ref{lem_poisson} it holds that for every $\mu\in \mathcal J_z^m$ there is a Jensen measure $\nu\in \mathcal J_z^{b,m}$ such that
\[
\int u\,d\mu\geq \int u\, d\nu\, .
\]
Thus,
\[
PB_f(z)\geq \textbf{S}_u(z)=\inf\left \{\int u\,d\mu; \mu\in \mathcal J_z^m\right\}=I_u(z)=I_f(z)\geq PB_f(z)\, .
\]

\end{proof}

We end this note with Theorem~\ref{thm_poisson}, and the characterization of those compact sets $X$ for which the Dirichlet problem has a solution within the class of $m$-harmonic functions defined on a compact set.

\begin{theorem}\label{thm_poisson}
Let $X$ be a compact set in $\C^n$, and let $1\leq m\leq n$. Then the following conditions are equivalent:
\begin{enumerate}\itemsep2mm
\item $X$ is a $m$-Poisson set;

\item for every $z\in X$, the set $\mathcal{J}_z^{b,m}$ contains exactly one measure, $P_z^m$;

\item for every $f\in \mathcal{C}(B^m_X)$ we have that
\[
PB_{-f}=-PB_f\quad \text{ on } X\, .
\]
\end{enumerate}
\end{theorem}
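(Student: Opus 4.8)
The plan is to establish the cycle of implications $(1)\Rightarrow(2)\Rightarrow(3)\Rightarrow(1)$, and throughout I would freely invoke that a $m$-Poisson set must in particular be $O^m$-regular, since solving the Dirichlet problem for $m$-harmonic (hence continuous $m$-subharmonic) boundary data on $B^m_X$ forces condition (3) of Theorem~\ref{thm_oregular}, which is equivalent to $O^m$-regularity. This matters because it lets me apply part (2) of Proposition~\ref{prop_PBf}, namely $PB_f(z)=\inf\{\int f\,d\mu:\mu\in\mathcal{J}_z^{b,m}\}$, at the points of the argument where I need the dual description of the envelope.

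For $(1)\Rightarrow(2)$, fix $z\in X$ and suppose $\mu,\nu\in\mathcal{J}_z^{b,m}$. Since $X$ is $m$-Poisson, every $f\in\mathcal{C}(B^m_X)$ extends to some $h\in\mathcal{H}_m(X)$ with $h=f$ on $B^m_X$; because $h$ and $-h$ are both $m$-subharmonic and both measures are supported on $B^m_X$, I would get $\int f\,d\mu=\int h\,d\mu=h(z)=\int h\,d\nu=\int f\,d\nu$. As $f$ ranges over all of $\mathcal{C}(B^m_X)$ this forces $\mu=\nu$ by the Riesz representation theorem, so $\mathcal{J}_z^{b,m}$ is a singleton, which I name $P_z^m$. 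For $(2)\Rightarrow(3)$, I use that uniqueness collapses the infimum defining $PB_f$: when $\mathcal{J}_z^{b,m}=\{P_z^m\}$, part (2) of Proposition~\ref{prop_PBf} gives $PB_f(z)=\int f\,dP_z^m$, and likewise $PB_{-f}(z)=\int(-f)\,dP_z^m=-\int f\,dP_z^m=-PB_f(z)$, which is exactly the required identity on $X$.

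The implication $(3)\Rightarrow(1)$ is where I expect the real work to lie, since I must produce an actual $m$-harmonic extension from the symmetry of the envelope. Given $f\in\mathcal{C}(B^m_X)$, the hypothesis $PB_{-f}=-PB_f$ means that $-PB_f=PB_{-f}$ is lower semicontinuous while $PB_f$ is also lower semicontinuous by part (1) of Proposition~\ref{prop_PBf}; hence $PB_f$ is simultaneously lower and upper semicontinuous, so it is continuous. By part (1) of that proposition $PB_f$ is $m$-subharmonic on $X$, and applying the same reasoning to $-f$ shows $-PB_f=PB_{-f}$ is $m$-subharmonic as well, so $PB_f\in\mathcal{H}_m(X)$. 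The remaining point is to verify the boundary condition $PB_f=f$ on $B^m_X$: I would check this first on the Choquet boundary $O^m_X$, where $\mathcal{J}_z^{b,m}=\{\delta_z\}$ forces $PB_f(z)=f(z)$ via the dual formula, and then extend to the closure $B^m_X$ by continuity of both $PB_f$ and $f$.

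The main obstacle, and the step I would scrutinize most carefully, is guaranteeing that $PB_f$ is genuinely continuous and that the dual representation of Proposition~\ref{prop_PBf}(2) is legitimately available throughout $(3)\Rightarrow(1)$: that proposition requires $O^m$-regularity as a standing hypothesis, so I must confirm that condition (3) of the present theorem already implies $O^m$-regularity before invoking it. The cleanest route is probably to derive $O^m$-regularity directly from the envelope symmetry --- using $PB_{-f}=-PB_f$ together with Lemma~\ref{lem_choquet} to show no point of $B^m_X\setminus O^m_X$ can exist --- rather than assuming it, so that the argument is self-contained and the appeal to Proposition~\ref{prop_PBf}(2) is fully justified.
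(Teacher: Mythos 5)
Your cycle $(1)\Rightarrow(2)\Rightarrow(3)\Rightarrow(1)$ has a structural flaw: the only place you justify $O^m$-regularity is from hypothesis (1) (via Theorem~\ref{thm_oregular}), yet you invoke Proposition~\ref{prop_PBf}(2) --- which has $O^m$-regularity as a standing hypothesis --- inside the implications $(2)\Rightarrow(3)$ and $(3)\Rightarrow(1)$, where (1) is not available. In a cyclic proof each implication must be derived from its own hypothesis alone, so ``freely invoking'' that $m$-Poisson sets are $O^m$-regular does not license the dual formula in those two steps. This is exactly where the paper does real work in $(2)\Rightarrow(3)$: assuming only (2), it proves $X$ is $O^m$-regular by taking $O_X^m\ni z_j\to z$, extracting a weak$^*$-convergent subsequence of $\delta_{z_j}$ whose limit lies in $\mathcal{J}_z^{b,m}$, and using the uniqueness hypothesis to conclude that this limit is $P_z^m$ and hence $P_z^m=\delta_z$, i.e.\ $z\in O_X^m$. (Alternatively: for $z\in B_X^m$ one has $\delta_z\in\mathcal{J}_z^{b,m}$, so (2) gives $\mathcal{J}_z^{b,m}=\{\delta_z\}$; Lemma~\ref{lem_poisson} then makes every $\mu\in\mathcal{J}_z^m$ subordinate to $\delta_z$, and testing with the continuous $m$-subharmonic function $w\mapsto\|w-z\|^2$ forces $\mu=\delta_z$, so $B_X^m=O_X^m$ is closed.) Without some such argument your $(2)\Rightarrow(3)$ is unjustified; your $(1)\Rightarrow(2)$, by contrast, is fine and agrees with the paper.

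For $(3)\Rightarrow(1)$ you correctly flag the same problem, but your proposed repair --- deriving $O^m$-regularity from the envelope symmetry via Lemma~\ref{lem_choquet} --- is left as a hope rather than an argument, and it is in fact unnecessary. The paper's proof of this implication never uses Proposition~\ref{prop_PBf}(2): continuity and $m$-harmonicity of $PB_f$ are obtained exactly as you obtain them (part (1) of the proposition plus the symmetry $PB_{-f}=-PB_f$), and the boundary condition is then immediate from the definition of the envelopes. Indeed, $PB_f\le f$ and $PB_{-f}\le -f$ on $B_X^m$ by definition, and the second inequality together with (3) gives $PB_f=-PB_{-f}\ge f$ on $B_X^m$, hence $PB_f=f$ there --- no dual formula, no Choquet-boundary detour, no regularity needed. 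Replacing your verification of the boundary condition by this two-line observation, and inserting the regularity argument above into $(2)\Rightarrow(3)$, would make your proof complete.
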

\begin{proof} To prove the implication (1)$\Rightarrow$(2) assume that $X$ is a $m$-Poisson set, $z\in X$, and $\mu,\nu\in \mathcal J_z^{b,m}$. By the assumptions we have that for any $f\in \mathcal C(B_X^m)$ there exists a function $h\in \mathcal H_m(X)$ with $h=f$ on $B_X^m$. Hence,
\[
u(z)=\int f\,d\mu=\int f\,d\nu\, ,
\]
which implies that $\mu=\nu$. Next, we shall verify the implication (2)$\Rightarrow$(3). First we shall prove that $X$ is an $O^m$-regular set. Let $O_X^m\ni z_j\to z\in X$, as $j\to \infty$. Then from the sequence of measures $\delta_{z_j}$ we can extract a subsequence (denoted also by $\delta_{z_j}$) such that $\delta_{z_j}$ is weak$^*$-convergent to some measure $\mu\in \mathcal J_z^{b,m}$. By assumption the measure $\mu=P_z^m$ is unique. Then for every $f\in \mathcal C(X)$ we have that
\[
\int f\,dP_z^m=\lim_{j\to \infty}\int f\,d\delta_{z_j}=\lim_{j\to \infty}f(z_j)=f(z)=\int f\,d\delta_z\, ,
\]
which means that $P_z^m=\delta_z$, so $z\in O_X^m$. Thus, $X$ is an $O^m$-regular set. Now for $f\in\mathcal C(B_X^m)$ let us define the following function
\[
h(z)=\int f\,dP_z^m\, .
\]
We are going to prove that $h$ is a $m$-harmonic function. First we show that $h$ is continuous. Let $X\ni z_j\to z\in X$. We can assume that $P_{z_j}^m$ is weak$^*$-convergent to some measure $\mu\in \mathcal {J}_z^{b,m}$, if necessary we extract a subsequence. Therefore, by assumption $\mu=P_z^m$, and it follows that
\[
\lim_{j\to \infty}h(z_j)=\lim_{j\to \infty}\int f\,dP_{z_j}^m=\int f\,dP_z^m=h(z)\, .
\]
Proposition~\ref{prop_PBf} part (2), gives us that for every $z\in X$ it holds that
\begin{equation}\label{1}
h(z)=\int f\,dP_z^m=\inf \left \{\int f\,d\mu; \mu\in \mathcal J_z^{b,m} \right\}=PB_f(z)\, ,
\end{equation}
and therefore again by Proposition~\ref{prop_PBf} part (1) we can conclude that $h\in \mathcal {SH}_m(X)\cap \mathcal C(X)$. On the other hand,
for any $z\in X$ and any $\mu\in \mathcal J^m_z$ we have by Lemma~\ref{lem_poisson} that
\[
h(z)\leq \int fd\mu\leq \int h\,dP_z^m=\int f\,dP_z^m=h(z)\, .
\]
Hence, $h\in \mathcal H_m(X)$. Note also that it follows from (\ref{1}) that
\[
PB_{-f}(z)=\int -f\,dP_z^m=-\int f\,dP_z^m=-PB_f(z)\, .
\]
Next we shall prove the implication (3)$\Rightarrow$(1). By Proposition~\ref{prop_PBf} part (1), the envelopes $PB_f$ and $PB_{-f}$ are lower semicontinuous and therefore we can conclude that $PB_f\in \mathcal C(X)$. Furthermore, we have that for any $z\in X$ and any $\mu \in \mathcal J_z^m$ it holds that
\[
-PB_f(z)=PB_{-f}(z)\leq \int PB_{-f}\, d\mu=\int -PB_f\, d\mu\leq -PB_f(z)\, .
\]
Hence, $PB_f\in \mathcal H_m(X)$. To conclude that $(1)$ holds note that $PB_f\leq f$, and $PB_{-f}\leq -f$, on $B_X^m$ and therefore we must have that $PB_f=f$ on $B_X^m$.
\end{proof}

\begin{corollary}\label{cor_poisson}
Let $1\leq m\leq n$. Then every compact set $X$ in $\C^n$ that is a $m$-Poisson set is also an $O^m$-regular set.
\end{corollary}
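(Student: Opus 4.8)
The plan is to recognize this corollary as the isolation of the first step in the proof of the implication (2)$\Rightarrow$(3) of Theorem~\ref{thm_poisson}. Since $X$ is a $m$-Poisson set, it satisfies condition~(1) of Theorem~\ref{thm_poisson}, and hence the equivalent condition~(2): for every $z\in X$ the set $\mathcal{J}_z^{b,m}$ consists of a single measure $P_z^m$. To prove that $X$ is $O^m$-regular I must show, by Definition~\ref{def_Omregular}, that $O_X^m$ is closed in $X$.

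First I would take a sequence $O_X^m\ni z_j\to z\in X$ and aim to show $z\in O_X^m$. Since each $z_j\in O_X^m\subseteq B_X^m$ and $\delta_{z_j}\in \mathcal{J}_{z_j}^m$ trivially, we have $\delta_{z_j}\in \mathcal{J}_{z_j}^{b,m}$. For any $f\in\mathcal{C}(X)$ one has $\int f\,d\delta_{z_j}=f(z_j)\to f(z)=\int f\,d\delta_z$, so $\delta_{z_j}\to\delta_z$ in the weak$^*$ topology (no compactness extraction is even needed); and since $z\in\overline{O_X^m}=B_X^m$ this gives $\delta_z\in \mathcal{J}_z^{b,m}$. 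By the uniqueness in condition~(2) we conclude $\mathcal{J}_z^{b,m}=\{\delta_z\}$, i.e.\ $P_z^m=\delta_z$.

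It remains to upgrade this to $\mathcal{J}_z^m=\{\delta_z\}$, and this is the step I expect to be the main obstacle, since a priori the trivialization only concerns Jensen measures supported on $B_X^m$. Here I would invoke Lemma~\ref{lem_poisson}: given an arbitrary $\nu\in\mathcal{J}_z^m$ there is $\nu'\in\mathcal{J}_z^{b,m}=\{\delta_z\}$ with $\nu\preccurlyeq\nu'=\delta_z$, so $\int u\,d\nu\leq u(z)$ for every $u\in\mathcal{SH}_m(X)\cap\mathcal{C}(X)$; combined with the defining inequality $u(z)\leq\int u\,d\nu$ this forces $\int u\,d\nu=u(z)$ for all such $u$. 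Testing this equality against the $m$-harmonic functions $\REA\,\zeta_k$ and $\IMA\,\zeta_k$ shows that $\nu$ has barycenter $z$, and testing against the continuous $m$-subharmonic function $\|\zeta\|^2$ then yields $\int\|\zeta-z\|^2\,d\nu(\zeta)=0$, whence $\nu=\delta_z$. Thus $\mathcal{J}_z^m=\{\delta_z\}$, so $z\in O_X^m$ and $O_X^m$ is closed, proving that $X$ is $O^m$-regular. Alternatively, since this is precisely the reasoning that establishes $O^m$-regularity in the first part of the proof of the implication (2)$\Rightarrow$(3) of Theorem~\ref{thm_poisson}, the corollary may simply be read off from there.
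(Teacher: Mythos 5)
Your proof is correct, but it takes a genuinely different and heavier route than the paper's, whose proof is a one-line appeal to Theorem~\ref{thm_oregular}: by Definition~\ref{def_poissonset} every $f\in\mathcal{C}(B^m_X)$ extends to some $u\in\mathcal{H}_m(X)$, and since $m$-harmonic functions are automatically continuous (both $u$ and $-u$ are upper semicontinuous by definition), we have $\mathcal{H}_m(X)\subseteq\mathcal{SH}_m(X)\cap\mathcal{C}(X)$; thus a $m$-Poisson set satisfies condition (3) of Theorem~\ref{thm_oregular} verbatim, and the implication (3)$\Rightarrow$(1) of that theorem yields $O^m$-regularity with no Jensen-measure analysis at all. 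Your route instead invokes the implication (1)$\Rightarrow$(2) of Theorem~\ref{thm_poisson} (legitimate, since its proof does not use the corollary) and then argues on measures: for $z\in\overline{O_X^m}=B^m_X$ you observe $\delta_z\in\mathcal{J}_z^{b,m}$ directly (the weak$^*$ convergence discussion is indeed superfluous, as you note), so uniqueness gives $\mathcal{J}_z^{b,m}=\{\delta_z\}$, and then Lemma~\ref{lem_poisson} together with the test functions $\pm\REA\,\zeta_k$, $\pm\IMA\,\zeta_k$ and $\|\zeta\|^2$ (testing $\|\zeta-z\|^2$ alone would do) upgrades this to $\mathcal{J}_z^m=\{\delta_z\}$, i.e.\ $z\in O_X^m$. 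Every step checks out. What your detour buys is worth recording: the step you single out as ``the main obstacle'' is exactly the point that the paper's own proof of (2)$\Rightarrow$(3) in Theorem~\ref{thm_poisson} glosses over, where from $P_z^m=\delta_z$ (a statement about $\mathcal{J}_z^{b,m}$ only) the authors conclude $z\in O_X^m$ (a statement about all of $\mathcal{J}_z^m$); your subordination argument via Lemma~\ref{lem_poisson} supplies precisely the missing justification. So as a proof of the corollary your argument is valid but unnecessarily indirect; as a gloss on Theorem~\ref{thm_poisson} it is a genuine improvement.
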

\begin{proof} This follows immediately from Theorem~\ref{thm_oregular} and Theorem~\ref{thm_poisson}.
\end{proof}

\end{document}